\def\N{\mathbb{N}}
\newcommand{\revvec}[1]{\overset{\tiny\leftarrow}{#1}}
\renewcommand{\vec}[1]{\overset{\tiny\rightarrow}{#1}}
\newtheorem{theorem}{Theorem}
\newtheorem{observation}{Observation}
\newtheorem{problem}{Problem}
\newtheorem{definition}{Definition}
\newtheorem{proposition}{Proposition}
\newtheorem{corollary}{Corollary}
\newtheorem{lemma}{Lemma}
\title{Colorings of oriented planar graphs avoiding a monochromatic subgraph}
\date{}
\author  {
  Helena Bergold \footnotemark[1]\; \footnotemark[2]
  \and
  Winfried Hochst\"{a}ttler \footnotemark[2]
  \and
  Raphael Steiner \footnotemark[3]}
\begin{document}
\maketitle

  \renewcommand{\thefootnote}{\fnsymbol{footnote}}
  \footnotetext[1]{Institut f\"{u}r Informatik, Freie Universit\"at Berlin, Germany, email: \texttt{helena.bergold@fu-berlin.de}.
  	Funded by DFG-GRK 2434 Facets of Complexity.}
  \renewcommand{\thefootnote}{\arabic{footnote}}
  \footnotetext[2]{Fakult\"{a}t f\"{u}r Mathematik und Informatik, FernUniversit\"{a}t in Hagen, Germany, email: \texttt{winfried.hochstaettler@fernuni-hagen.de}.}
  \footnotetext[3]{Institut f\"{u}r Mathematik, Technische Universit\"at Berlin, Germany, email: \texttt{steiner@math.tu-berlin.de}.
  Funded by DFG-GRK 2434 Facets of Complexity.}
  \renewcommand{\thefootnote}{\arabic{footnote}}

\setlength{\parindent}{0em}
\begin{abstract}
	For a fixed simple digraph $F$ and a given simple digraph $D$, an \emph{$F$-free $k$-coloring} of $D$ is a vertex-coloring in which no induced copy of $F$ in $D$ is monochromatic. We study the complexity of deciding for fixed $F$ and $k$ whether a given simple digraph admits an $F$-free $k$-coloring. Our main focus is on the restriction of the problem to planar input digraphs, where it is only interesting to study the cases $k \in \{2,3\}$. From known results it follows that for every fixed digraph $F$ whose underlying graph is not a forest, every planar digraph $D$ admits an $F$-free $2$-coloring, and that for every fixed digraph $F$ with $\Delta(F) \ge 3$, every oriented planar graph $D$ admits an $F$-free $3$-coloring. 

	We show in contrast, that
	\begin{itemize}
		\item if $F$ is an orientation of a path of length at least $2$, then it is \NP-hard to decide whether an \emph{acyclic and planar} input digraph $D$ admits an $F$-free $2$-coloring.
		\item if $F$ is an orientation of a path of length at least $1$, then it is \NP-hard to decide whether an \emph{acyclic and planar} input digraph $D$ admits an $F$-free $3$-coloring.
	\end{itemize}
\end{abstract}

\section{Introduction}
In this paper we are concerned with colorings of simple planar digraphs, i.e., orientations of planar graphs.
We consider the following notion of coloring.
\begin{definition}
Let $F$ be a digraph. 
\begin{itemize}
\item A digraph is \emph{$F$-free} if it has no induced subdigraph isomorphic to $F$.
\item A coloring $c\mathop{:}V(D) \rightarrow \{0,1,\ldots,k-1\}$ of the vertices of a digraph $D$ is called an \emph{$F$-free $k$-coloring} or simply \emph{$F$-free} if for every color $i \in \{0,1,\ldots,k-1\}$, each of the subdigraphs $D[c^{-1}(i)]$, $i=0,1,\ldots,k-1$ induced by the color classes is $F$-free.
\end{itemize}
\end{definition}

There is an analogous definition of $F$-free coloring for undirected graphs, which has been the subject of study of several previous papers. Gimbel and Hartman~\cite{GIMBEL2003} studied \emph{subcolorings} of graphs, which are colorings in which every color class induces a disjoint union of cliques, or, equivalently, in which no induced $P_3$ is monochromatic. Generalizing this setting, Gimbel and Ne\v{s}et\v{r}il~\cite{Gimbel2010} studied the complexity of \emph{cograph colorings} of graphs, i.e., vertex-colorings in which every color class induces a cograph. As the latter are exactly the graphs avoiding $P_4$ as an induced subgraph, these are the $P_4$-free colorings. In both cases hardness results were obtained even for planar graphs: It was shown that deciding whether a given planar graph admits a $2$-subcoloring, a $3$-subcoloring, a $2$-cograph coloring, or a $3$-cograph-coloring, respectively, are each \NP-complete. All these results were further generalized by Broersma, Fomin, Kratochvil and Woeginger~\cite{Broersma2005} who gave a complete dichotomous description of the complexity of coloring planar graphs avoiding a monochromatic induced copy of some connected planar graph $F$. They proved that the $2$-coloring problem is \NP-hard if $F$ is a tree consisting of at least two edges and polynomially-solvable in all other cases. The $3$-coloring problem was shown to be NP-hard if $F$ is a path of positive length and polynomially solvable in all remaining cases.

In this paper, we will study the directed version of this problem. For a given digraph $F$, we study the complexity of
$F$-free $k$-coloring for planar digraphs as input.
If there is an $U(F)$-free coloring of a graph, we can also orient the edges arbitrarily and get an $F$-free coloring of an orientation.  Here $U(F)$ is the underlying undirected graph of the digraph $F$.  We determine the
complexity of $P$-free coloring of planar digraphs with $2$ or
$3$ colors for every oriented path $P$. More generally, for a
fixed digraph $F$ we would like to determine the complexity of the
following decision problem.

\begin{problem}[PLANAR $F$-FREE $k$-COLORING, $k$-$F$-PFC]
	Given a planar digraph $D$, determine whether or not $D$ admits an $F$-free coloring using $k$ colors.
\end{problem}

Note that the only interesting variants of the problem are for
$k \in \{2,3\}$, as clearly for $k \ge 4$ and any non-trivial digraph
$F$ on at least two vertices the $4$-color theorem implies that every
simple planar digraph admits an $F$-free coloring with $k$ colors. On the
other hand, if $|V(F)|=1$ or $k=1$, the problem admits (trivial)
polynomial algorithms. Finally, if $F$ consists of isolated vertices
only, no planar digraph on more than $4k\,|V(F)|$ vertices can have an 
$F$-free $k$-coloring. The reason for this is as follows: In an $F$-free $k$-coloring of a planar digraph $D$, every color class induces a planar subdigraph which contains no stable set of size $|V(F)|$. By the $4$-color-theorem, this means that this subdigraph has at most $4(|V(F)|-1)<4|V(F)|$ vertices. Hence, in total a planar digraph which admits an $F$-free $k$-coloring has order at most $k\, \cdot\, 4|V(F)|$. 
Hence, since relevant instances of this problem are bounded in size, any (brute-force) algorithm responds in constant time.

\begin{observation}
	The $k$-$F$-PFC is polynomial-time solvable for $k \ge 4$ and every digraph $F$.
\end{observation}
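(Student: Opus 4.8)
The plan is to prove the statement by a short case analysis on the structure of the fixed digraph $F$, where in each case either the answer turns out to be affirmative for every input, or only boundedly many instances need to be inspected. Throughout, fix $k \ge 4$.

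First I would treat the generic case in which $F$ has at least one arc. The key observation is that any proper coloring of the underlying undirected graph $U(D)$ automatically yields an $F$-free coloring of $D$: each color class is then a stable set and hence induces a subdigraph without any arc, which cannot contain $F$ as an induced subdigraph since $F$ has an arc. As $D$ is planar, so is $U(D)$, and the Four Color Theorem guarantees a proper $4$-coloring of $U(D)$, using at most $k$ colors. Therefore \emph{every} planar digraph admits an $F$-free $k$-coloring in this case, and the algorithm may simply answer ``yes''. (If a witness is required, a proper $4$-coloring of a planar graph can in fact be computed in polynomial time.)

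Next I would handle the degenerate case in which $F$ is edgeless, i.e.\ consists of $|V(F)| \ge 2$ isolated vertices. Here the previous argument breaks down, since a stable set now contains induced copies of $F$. Instead I would invoke the counting bound already recorded in the discussion preceding the statement: in any $F$-free $k$-coloring, each color class is a planar digraph with no stable set of size $|V(F)|$, hence by the Four Color Theorem has fewer than $4|V(F)|$ vertices, so $|V(D)| < 4k\,|V(F)|$. Since $F$ and $k$ are fixed, this is a constant, so the algorithm rejects immediately whenever $|V(D)| \ge 4k\,|V(F)|$ and otherwise brute-forces over the constantly many colorings, checking $F$-freeness of each color class. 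The remaining case $|V(F)| = 1$ is trivial: an $F$-free coloring then forces every color class to be empty, so the answer is ``yes'' precisely when $D$ has no vertices.

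The main (and only mild) obstacle is recognizing that the edgeless $F$ must be separated out: there the Four Color Theorem cannot force a ``yes'' instance, and one must instead argue that the relevant instances are bounded in size. Once this case split is made, every step is immediate, the work being carried entirely by the reduction to proper colorings in the generic case and by the bounded-size argument in the edgeless case.
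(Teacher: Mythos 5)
Your proof is correct and follows essentially the same route as the paper: the paper's justification (given in the paragraph preceding the Observation) likewise splits into the case where $F$ has an arc (where the Four Color Theorem makes every planar digraph a ``yes''-instance), the edgeless case with $|V(F)|\ge 2$ (where yes-instances have at most $4k\,|V(F)|$ vertices, so brute force suffices), and the trivial case $|V(F)|=1$. Nothing is missing; your treatment of the edgeless and single-vertex cases is, if anything, slightly more explicit than the paper's.
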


Using the following fact from the literature we can further restrict our attention to the case where $F$ is an oriented tree.

\begin{proposition}[\cite{thomassencycles}, \cite{thomassentriangles}]
	Let $G$ be a planar (undirected) graph, and let $n \ge 3$. Then $G$ admits a $C_n$-free $2$-coloring.
\end{proposition}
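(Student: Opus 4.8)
The plan is to split into the two cases $n=3$ and $n \ge 4$, since the obstruction to a $C_n$-free $2$-coloring has a genuinely different flavor for triangles than for the longer (necessarily chordless) cycles. In each case a $C_n$-free $2$-coloring is a partition $V(G)=A\cup B$ in which neither $G[A]$ nor $G[B]$ contains an induced copy of the single forbidden cycle $C_n$; crucially the coloring may depend on $n$.

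For $n=3$ this is exactly the assertion that every planar graph can be $2$-colored with no monochromatic triangle, and I would obtain it directly from Thomassen's theorem \cite{thomassentriangles}, which proves the stronger list-version: every planar graph is $2$-list-colorable so that no triangle is monochromatic. I expect this triangle case to be the main obstacle of the whole proposition. Unlike the longer cycles treated below, triangles cannot be destroyed by any simple parity or degeneracy argument, and even the known proof requires a delicate analysis of a minimal counterexample; everything hard is concentrated here.

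For $n \ge 4$ I would reduce to a single clean sufficient condition. An induced $C_n$ with $n \ge 4$ is in particular a chordless cycle of length at least $4$, so any $2$-coloring in which both $G[A]$ and $G[B]$ are \emph{chordal} is automatically $C_n$-free for every $n \ge 4$ at once. I would therefore try to prove that every planar graph admits a vertex $2$-coloring into two chordal induced subgraphs (the cycle statement being the content of \cite{thomassencycles}). The natural route is a minimal counterexample with discharging: show a smallest bad planar graph is $2$-connected and contains a reducible configuration (a low-degree vertex, or a small separator bounding a near-triangulated region), delete it, $2$-color the rest by minimality, and re-insert the deleted vertex into whichever class preserves chordality. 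The crux of this case is the extension step: a new chordless cycle through the re-inserted vertex $v$ would join two neighbors of $v$ in its class $A$ by an induced path inside $G[A\setminus v]$, so one must use the planar embedding around $v$ to argue its neighbors in $A$ are too few or too tightly clustered for this to occur.

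Combining the two cases yields the proposition for all $n \ge 3$. As a fallback, should the uniform chordal partition prove too strong, one can instead target the single length $n$ directly for large $n$ via the Lov\'asz Local Lemma, since under a random $2$-coloring each event ``this induced $C_n$ is monochromatic'' has probability $2^{1-n}$ and long cycles in a planar graph are sufficiently spread out; but I would first push the chordal statement, which is cleaner and disposes of all $n \ge 4$ simultaneously.
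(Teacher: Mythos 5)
Your overall structure matches the paper more closely than you may expect: the paper's entire proof is two citations, handling $n=3$ by \cite{thomassentriangles} (exactly the triangle result you invoke) and all cases $n\ge 4$ by \cite{thomassencycles}. Your reduction for $n\ge 4$ is also sound as a reduction: a partition into two chordal classes kills every induced $C_n$ with $n\ge 4$ at once, and the decomposition of planar graphs provided by \cite{thomassencycles} is at least this strong, so citing it is all that is needed.

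The genuine gap is that you propose to \emph{prove} the chordal-partition statement rather than cite it, and the proof you sketch does not exist yet. The step you yourself call the crux --- deleting a reducible configuration, coloring by minimality, and re-inserting a vertex $v$ ``into whichever class preserves chordality'' --- is precisely where all the difficulty of such decomposition theorems lives, and no argument is offered for it: planarity only guarantees a vertex of degree at most $5$, so $v$ can have three neighbors in each class, and nothing prevents two nonadjacent neighbors of $v$ in class $A$ from being joined by an induced path in $G[A\setminus\{v\}]$, creating a long chordless cycle through $v$; ``too few or too tightly clustered'' is a hope, not a reducible configuration. Thomassen's cited theorem is proved by a delicate induction with a strengthened hypothesis, not by a minimum-degree/discharging argument, which is exactly why the paper cites it instead of reproving it. Your Lov\'asz Local Lemma fallback also fails concretely, because induced copies of $C_n$ in a planar graph need not be ``spread out'': take a cycle $v_1\dots v_m$ and replace each edge $v_iv_{i+1}$ by five internally disjoint paths of length $2$; choosing one middle vertex per ``super-edge'' gives $5^m$ induced copies of $C_{2m}$, all containing $v_1,\dots,v_m$ and hence pairwise intersecting. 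With $n=2m$ and $p=2^{1-n}$, the dependency degree is at least $5^{n/2}-1$, so $e\,p\,(d+1)\ge 2e\,(5/4)^{n/2}\to\infty$ and the symmetric Local Lemma condition is violated. As written, your $n\ge 4$ case therefore rests on an unproven (though true) theorem; replacing the sketch by the citation to \cite{thomassencycles}, as the paper does, repairs it.
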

\begin{proof}
The case $n=3$ is well-known and can be found for instance in \cite{thomassentriangles}, while the cases $n \ge 4$ are covered by \cite{thomassencycles}.
\end{proof}

\begin{corollary}
The $k$-$F$-PFC is polynomial-time solvable for every $k \in \mathbb{N}$ and every digraph $F$ containing a (not necessarily directed) cycle.
\end{corollary}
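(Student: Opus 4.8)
The plan is to prove the stronger statement that whenever $U(F)$ is not a forest (equivalently, $F$ contains a not-necessarily-directed cycle), every planar digraph $D$ admits an $F$-free $k$-coloring for all $k \ge 2$; the decision problem then answers ``yes'' on every instance and is trivially polynomial. The case $k=1$ is already covered by the remarks above, where one only tests whether $D$ itself is $F$-free, and the step from $2$ to more colors is free: any $F$-free $2$-coloring becomes an $F$-free $k$-coloring by leaving the extra $k-2$ classes empty, each empty class being $F$-free since $F$ has at least one vertex. So it suffices to produce one $F$-free $2$-coloring of $D$.

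I would begin by setting $g$ to be the girth of $U(F)$, that is, the length of a shortest cycle; since $F$ contains a cycle we have $g \ge 3$. The crucial elementary fact is that a shortest cycle is chordless, so $U(F)$ contains $C_g$ as an \emph{induced} subgraph. Next I apply the Proposition to the underlying planar graph $U(D)$ with $n = g$, obtaining a $2$-coloring $c$ of $U(D)$ --- equivalently of $D$ --- none of whose color classes contains an induced copy of $C_g$.

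It then remains to check that $c$ is $F$-free on $D$. Suppose some class $c^{-1}(i)$ induced a copy of $F$ on a vertex set $S$. Because the copy is induced, $U(D)[S]$ is isomorphic to $U(F)$; transporting the induced girth cycle through this isomorphism yields a set $T \subseteq S$ of $g$ vertices with $U(D)[T] \cong C_g$ induced in $U(D)$. Since $T \subseteq S \subseteq c^{-1}(i)$ is monochromatic, this contradicts the choice of $c$. Hence no color class of $c$ contains an induced copy of $F$, and $c$ is the required $F$-free $2$-coloring.

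I expect the whole argument to be short, and the single point demanding care is the compatibility of the word ``induced'' at the two levels: the cycle that certifies that $U(F)$ is not a forest must remain induced once embedded in $U(D)$, for only then is it forbidden by the $C_g$-free property of $c$. Choosing a \emph{shortest} cycle, which is automatically chordless, is precisely what guarantees this; an arbitrary cycle of $U(F)$ might acquire chords in $U(D)$ and would not yield the contradiction.
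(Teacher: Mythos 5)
Your proposal is correct and follows exactly the route the paper intends: the paper states this Corollary without proof as an immediate consequence of the preceding Proposition, and your derivation---taking a shortest (hence chordless, hence induced) cycle $C_g$ in $U(F)$, applying the Proposition to $U(D)$ with $n=g$, transporting the induced cycle through the isomorphism, handling $k=1$ by the trivial brute-force test and $k\ge 2$ by adding empty color classes---is precisely the missing argument spelled out. Your closing remark about why the cycle must be chordless is the one genuine subtlety in this deduction, and you handle it correctly.
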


For the case of $k=3$ colors, a more general statement holds true.
\begin{proposition}
	Let $D$ be a planar digraph and $F$ an orientation of a graph of maximum degree at least $3$. Then $D$ admits an $F$-free $3$-coloring.
\end{proposition}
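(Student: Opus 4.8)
The plan is to reduce the statement to a classical \emph{defective coloring} result and to translate bounded underlying degree into $F$-freeness. The key observation is that, since $F$ is an orientation of a graph $U(F)$ with $\Delta(U(F)) \ge 3$, the digraph $F$ contains a vertex $v$ that is incident, ignoring orientation, to at least three arcs of $F$. Consequently, any digraph that contains $F$ as a subdigraph (in particular as an induced subdigraph) must have underlying maximum degree at least $3$. Thus, if each color class of a $3$-coloring of $D$ induces a subdigraph whose underlying graph has maximum degree at most $2$, then that coloring is automatically $F$-free, and the whole problem is reduced to producing such a degree-bounded partition of the vertex set.

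To obtain such a coloring I would invoke the theorem of Cowen, Cowen and Woodall: every planar graph admits a partition of its vertex set into three parts, each inducing a subgraph of maximum degree at most $2$. Applying this to the underlying planar graph $U(D)$ yields a $3$-coloring $c : V(D) \to \{0,1,2\}$ such that for each color $i$ the induced subgraph $U(D)[c^{-1}(i)]$ has maximum degree at most $2$.

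It then remains to verify that $c$, read as a coloring of $D$, is $F$-free. For any color $i$, the underlying graph of $D[c^{-1}(i)]$ is exactly $U(D)[c^{-1}(i)]$, which has maximum degree at most $2$. If $F$ were an induced subdigraph of $D[c^{-1}(i)]$, then $U(F)$ would be a subgraph of $U(D)[c^{-1}(i)]$, forcing a vertex of degree at least $3$ in the latter, a contradiction. Hence no color class contains an induced copy of $F$, and $c$ is the desired $F$-free $3$-coloring.

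I expect no genuine obstacle beyond identifying the correct tool: essentially all of the content is packaged into the defective-coloring theorem, and the degree count is the mechanism that converts ``bounded underlying degree'' into ``$F$-free''. The only point requiring a little care is the direction of the containment argument, namely confirming that an induced copy of $F$ inside a color class really does force a high-degree vertex in the underlying subgraph; this holds because passing to underlying graphs sends subdigraphs to subgraphs and cannot decrease the degree of the image of a high-degree vertex. Note also that the argument uses only that $F$ appears as a (not necessarily induced) subdigraph, which makes the conclusion robust.
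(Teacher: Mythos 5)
Your proof is correct, and it follows the same overall template as the paper's: invoke a classical theorem partitioning the vertex set of a planar graph into three parts with bounded structure, then observe that a digraph whose underlying graph has a vertex of degree at least $3$ cannot embed into any such part. The difference lies in the key lemma. You use the defective-coloring theorem of Cowen, Cowen and Woodall (every planar graph has a $3$-coloring in which each color class induces a subgraph of maximum degree at most $2$), whereas the paper cites the stronger result that the vertex set of every planar graph can be partitioned into three parts each inducing a \emph{disjoint union of paths} (a linear forest). Your tool is weaker but exactly sufficient here, since only the degree bound $\Delta \le 2$ on the color classes is needed to exclude $U(F)$ when $\Delta(U(F)) \ge 3$; it is also an older and somewhat more elementary result. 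The paper's linear-forest partition buys slightly more: color classes that are additionally acyclic, so the same coloring also excludes any $F$ whose underlying graph contains a cycle — though the paper handles that case separately (with $2$ colors) anyway, so for the proposition as stated the two arguments are equally adequate. Your verification that an induced copy of $F$ in a color class forces a degree-$3$ vertex in the underlying induced subgraph is the same (correct) degree-counting step the paper leaves implicit in its ``clearly yields.''
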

\begin{proof}
The main result of~\cite{linear3} states that the vertex set of every planar graph can be partitioned into $3$ parts, each of them inducing a disjoint union of paths. This clearly yields an $F$-free coloring of every planar digraph $D$, hence we can solve the problem by returning 'true' for every planar input digraph $D$.
\end{proof}

On the negative side, one can show that the $F$-free $2$- and $3$-coloring problem becomes hard if $F$ is an orientation of a path. More precisely, the following are our main results. By $\mathcal{P}_n$ we denote the set of all orientations of the path $P_n$ on $n \in \N$ vertices. 

\begin{theorem} \label{thm:2col_NPhard}
Let $n \in \mathbb{N}$ and $\vec{P} \in \mathcal{P}_n$. Then the $2$-$\vec{P}$-PFC is polynomial-time solvable for $n \le 2$ and \NP-complete for $n \ge 3$, even restricted to acyclic digraphs.
\end{theorem}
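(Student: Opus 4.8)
The plan is to treat three regimes separately: membership in \NP, the polynomial cases $n \le 2$, and the hardness for $n \ge 3$. Membership in \NP is immediate: since $\vec{P}$ has a fixed number $n$ of vertices, given a candidate $2$-coloring one verifies in time $O(|V(D)|^n)$ that no color class contains an induced copy of $\vec{P}$. For $n=1$ the forbidden subdigraph is a single vertex and the problem is trivial. For $n=2$, the pattern $\vec{P}$ is a single arc, and an induced copy of it inside a color class is precisely a pair of vertices joined by \emph{exactly one} arc (digons and non-adjacent pairs are harmless). Hence a $\vec{P}$-free $2$-coloring is exactly a proper $2$-coloring of the auxiliary undirected graph $H$ on $V(D)$ whose edges are the ``single-arc'' pairs, so the problem reduces to testing whether $H$ is bipartite, which is polynomial.

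The substance of the theorem is the hardness for $n \ge 3$, which I would prove by a polynomial reduction from a planar NP-hard constraint satisfaction problem — my first choice being planar $3$-\textsc{Sat} or a planar (monotone) \textsc{Not-All-Equal} variant — whose planar structure is what keeps the constructed digraph planar. Since reversing all arcs turns a $\vec{P}$-free coloring of $D$ into a $\vec{P}^{\mathrm{rev}}$-free coloring of the reverse of $D$, and $\mathcal{P}_n$ is closed under reversal, I may normalize $\vec{P}$ so that its first arc points forward. The reduction then builds, for each variable, a gadget with exactly two ``stable'' $\vec{P}$-free $2$-colorings encoding true/false; for each clause a gadget that admits a $\vec{P}$-free coloring precisely when its literals are not all assigned equal colors; and \emph{wires} propagating the Boolean value between them. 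The key primitive is an oriented-path wire whose orientation repeats the pattern $\vec{P}$, so that any $n$ consecutive equally-colored vertices form a monochromatic copy of $\vec{P}$; crucially, a bare path is chordless, so such a copy is automatically \emph{induced}. Thus a $\vec{P}$-free coloring is forced to break up long monochromatic runs along each wire, and by attaching short rigid ``forcing caps'' at chosen positions I can pin down the color of designated terminal vertices and transmit a bit.

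To make the variable and clause gadgets rigid I would assemble several such wires into fixed-size sub-digraphs whose only $\vec{P}$-free $2$-colorings, subject to the terminal constraints, are the intended ones. Planarity is obtained by laying the gadgets out along a planar embedding of the incidence graph of the instance and routing wires through crossing-free channels, exactly as in the undirected reductions of Gimbel--Ne\v{s}et\v{r}il and Broersma et al. Acyclicity is maintained by fixing a global topological order on all gadget vertices and orienting every wire consistently with it; this is compatible with embedding $\vec{P}$ because $\vec{P}$ is itself acyclic. Correctness then amounts to verifying that variable assignments correspond bijectively to the forced colorings of the variable gadgets, and that a clause is satisfiable iff its gadget is $\vec{P}$-free.

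The hard part will be constructing the forcing and clause gadgets \emph{uniformly in the orientation} $\vec{P}$ while simultaneously guaranteeing two things. First, the only induced copies of $\vec{P}$ that can arise must be the ones we intend to flag as ``violations'': spurious induced copies created by arcs \emph{and non-arcs} between distinct gadgets have to be excluded, and this is the genuinely delicate point, since inducedness forces us to control non-adjacencies across the entire construction, not merely within a single gadget. Second, the whole digraph must stay acyclic and planar. I expect this to require a short case analysis on the internal orientation structure of $\vec{P}$ — for instance whether $\vec{P}$ is a directed path or instead contains an internal source or sink — with one common gadget template specialized to each case, since the local ``forcing'' behaviour of a wire differs between a monotone run and an alternating run.
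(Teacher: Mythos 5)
Your handling of NP-membership and of the cases $n \le 2$ matches the paper and is fine. But for the hardness part ($n \ge 3$), which is the entire substance of the theorem, there is a genuine gap: the proof is a plan whose key objects are never constructed, and its one concrete primitive is flawed. The wire claim --- that along an oriented path whose orientation ``repeats the pattern $\vec{P}$,'' any $n$ consecutive equally-colored vertices form a monochromatic induced copy of $\vec{P}$ --- is false unless $\vec{P}$ is the directed path. For example, if $\vec{P}$ is the orientation of $P_3$ with a sink in the middle, the repeated pattern is the alternating path, and only every other consecutive triple is isomorphic to $\vec{P}$; in general a window of length $n$ at an arbitrary offset yields a different orientation of $P_n$, so the wire does not force what you claim. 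Beyond that, the variable gadgets (``exactly two stable colorings''), the clause gadgets, and the ``forcing caps'' are deferred to ``a short case analysis on the internal orientation structure of $\vec{P}$'' --- but there are exponentially many orientations of $P_n$ up to reversal, and the local forcing behaviour depends on the full orientation sequence, not merely on whether $\vec{P}$ is directed or has an internal source or sink. (Two smaller problems: in a $2$-coloring problem the colors are interchangeable, so no cap can ``pin down'' an absolute color --- you can only force equality/inequality relative to a reference vertex; and your acyclicity argument, which reorients wires to respect a global topological order, conflicts with the requirement that wire orientations follow the pattern $\vec{P}$.)

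The missing idea, and the reason the paper never needs a uniform-in-$\vec{P}$ construction, is a reduction between path lengths: given an oriented tree $\vec{T}$, attaching to every vertex $v$ of an instance $D$ suitably many out- and in-copies of $\vec{T}$ produces a planar acyclic $D'$ such that $D'$ is $\vec{T}$-free $2$-colorable iff $D$ is ${\rm lrem}(\vec{T})$-free $2$-colorable, where ${\rm lrem}$ deletes all leaves (Proposition~\ref{remove2col}). Since removing the two leaves of a path on $n$ vertices leaves a path on $n-2$ vertices, this gives an induction reducing every $n \ge 5$ to the base cases $n \in \{3,4\}$, and only for those finitely many orientations ($\vec{P}_3$, $\vec{V}_3$, and $\vec{P}_4$, $\vec{N}_4$, $\vec{L}_4$ up to reversal) does the paper build explicit negator/extender/crossover/clause gadgets. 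Even those small cases required separate, orientation-specific gadgets, which is strong evidence that your single template ``specialized to each case'' cannot be completed as described. To repair your proof you would either need to supply genuinely uniform gadgets for all $\vec{P}$ (which no one has done, and which your wire primitive cannot support), or adopt some length-reduction device in the spirit of Proposition~\ref{remove2col} and then do the finitely many base cases by hand.
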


\begin{theorem}\label{thm:3col_NPhard}
	Let $n \in \mathbb{N}$ and $\vec{P} \in \mathcal{P}_n$. Then the $3$-$\vec{P}$-PFC is polynomial-time solvable for $n=1$ and \NP-complete for $n \ge 2$, even restricted to acyclic digraphs.
\end{theorem}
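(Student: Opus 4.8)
The plan is to separate the trivial regime $n=1$ from the hard regime $n\ge 2$, and to establish \NP-hardness by polynomial reductions from $3$-colorability of planar graphs. Membership in \NP is immediate: since $\vec{P}$ has a fixed number $n$ of vertices, one checks in polynomial time whether a given $3$-coloring is $\vec{P}$-free by inspecting all $O(|V(D)|^n)$ vertex subsets of size $n$ and testing whether any induces a monochromatic copy of $\vec{P}$. For $n=1$ the digraph $\vec{P}$ is a single vertex, so a color class is $\vec{P}$-free if and only if it is empty; hence $D$ admits a $\vec{P}$-free $3$-coloring if and only if $V(D)=\emptyset$, and the problem is trivially polynomial-time solvable. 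It therefore remains to prove \NP-hardness for every fixed $\vec{P}\in\mathcal{P}_n$ with $n\ge 2$, restricted to acyclic planar inputs.

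The base case $n=2$ is clean and guides the general reduction. Here $\vec{P}$ is a single arc, and since the input is an orientation of a graph, a color class induces a copy of $\vec{P}$ exactly when it contains two adjacent vertices. Thus a $\vec{P}$-free coloring is precisely a proper coloring of the underlying graph, and $3$-$\vec{P}$-PFC coincides with $3$-colorability of planar graphs. Given a planar graph $G$, I would fix an arbitrary acyclic orientation $D$ of $G$ (obtained in polynomial time by orienting each edge along a fixed linear vertex order); then $D$ is an acyclic planar digraph admitting a $\vec{P}$-free $3$-coloring if and only if $G$ is properly $3$-colorable. Since the latter is \NP-complete, this settles $n=2$.

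For $n\ge 3$ the same global strategy applies: reduce from planar $3$-colorability and replace every edge $uv$ of $G$ by an \emph{inequality gadget}, an acyclic planar digraph $\Gamma_{uv}$ with two terminals identified with $u$ and $v$ such that (i) every $\vec{P}$-free $3$-coloring of $\Gamma_{uv}$ gives $u$ and $v$ distinct colors, and (ii) every assignment of two distinct colors to $u,v$ extends to a $\vec{P}$-free $3$-coloring of $\Gamma_{uv}$. The building block is a pendant partial copy of $\vec{P}$: attaching to a vertex $t$ a copy of $\vec{P}$ with one endpoint deleted, with $t$ playing the role of that endpoint, yields a structure that cannot be entirely monochromatic in $t$'s color without completing a monochromatic induced $\vec{P}$. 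Chaining such pendants together with auxiliary forced-color vertices, while orienting all arcs consistently with a fixed linear vertex order to guarantee acyclicity, should realize the required inequality behaviour; the gadget technology underlying Theorem~\ref{thm:2col_NPhard} is closely related and can be reused here. To avoid crossings I would reduce from the planar variant (planar $3$-colorability, or planar $3$-SAT), so the gadgets embed in the plane, resolving any unavoidable wire crossings by a small planar crossover gadget again built from oriented paths. Correctness then follows in the two standard directions: a proper $3$-coloring of $G$ extends gadget-by-gadget to a $\vec{P}$-free $3$-coloring of $D$, and the restriction of any $\vec{P}$-free $3$-coloring of $D$ to the terminals is a proper $3$-coloring of $G$.

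The main obstacle is the gadget design for $n\ge 3$, and it is genuinely the crux. Three difficulties compound. First, the forbidden pattern is an \emph{induced} copy of $\vec{P}$, so one must control not only which arcs are present inside a color class but also which non-arcs are absent; inserting gadget vertices can both create and destroy induced copies, including new ones straddling the boundary between a gadget and the rest of $D$. Second, the construction must work \emph{uniformly} over all orientations of $P_n$ — the directed path, the orientations with a single source or sink, and the mixed zig-zag orientations all behave differently — so the gadget is likely described by a case analysis on the first and last arcs of $\vec{P}$ (or on its orientation type) rather than by a single universal figure. Third, planarity and acyclicity must be preserved simultaneously, which constrains how pendants and crossover gadgets may be oriented and embedded. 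Verifying that the assembled gadgets force exactly the intended constraints and create no spurious monochromatic induced copy of $\vec{P}$ is where the bulk of the technical work lies; once the inequality gadget is in hand, the reduction and its correctness are routine.
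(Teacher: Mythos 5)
Your handling of \NP-membership, of $n=1$, and of the base case $n=2$ (reduction from planar $3$-colorability via an acyclic orientation) is correct and coincides with the paper. The gap is everything for $n\ge 3$: your argument rests entirely on an ``inequality gadget'' $\Gamma_{uv}$ whose existence you never establish. You describe the two properties it must have, concede that designing it ``is genuinely the crux'', and conclude only that chaining pendant partial copies of $\vec{P}$ ``should realize the required inequality behaviour''. That is a plan, not a proof. Moreover, the one concrete idea offered is too weak: attaching to $t$ a copy of $\vec{P}$ minus an endpoint only forbids one specific $n$-vertex set from being monochromatic, which with three colors available is a far weaker constraint than forcing two terminals to receive distinct colors; no argument is given that any assembly of such pendants yields properties (i) and (ii) while remaining planar, acyclic, and free of spurious induced copies. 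The suggestion that the gadgets behind Theorem~\ref{thm:2col_NPhard} can be reused also does not survive scrutiny: the correctness proofs of those negator/extender/crossover gadgets repeatedly exploit that with \emph{two} colors, excluding one color on a vertex determines its color, and this dichotomy is exactly what disappears when a third color is available.

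The paper's proof never constructs an inequality gadget for general $\vec{P}$, and this is worth understanding because it shows which tools you are missing. The key device is Theorem~\ref{outermonchromaticpaths}: an acyclic \emph{outerplanar} digraph $\vec{O}(\vec{P})$ admitting no $\vec{P}$-free $2$-coloring, so that in any $\vec{P}$-free $3$-coloring every attached copy of it must carry all three colors. For $n=3$ (Proposition~\ref{23path3colors}) the reduction from planar $3$-colorability attaches to each vertex $v$ of an acyclic orientation a copy of $\vec{O}(\vec{V}_3)$ with all arcs oriented into $v$; a monochromatic arc of $D$ then extends, using a same-colored vertex of the attached copy, to a monochromatic induced path on three vertices, while the converse direction needs Observation~\ref{criticaloutermonochromaticpaths} (a $\vec{P}$-free $3$-coloring of $\vec{O}$ with a \emph{unique} vertex of a prescribed color) precisely to rule out new induced copies straddling $D$ and the attachments --- the induced-subgraph subtlety you flag but do not resolve. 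For every $n\ge 4$ no new construction is made at all: Proposition~\ref{remove3col} polynomially reduces the $3$-${\rm lrem}(\vec{P})$-PFC to the $3$-$\vec{P}$-PFC (again by attaching copies of $\vec{O}(\vec{P})$, per vertex rather than per edge), and induction on $n$, peeling two leaves at a time down to the base cases $n\in\{2,3\}$, finishes the proof. To repair your write-up you must either genuinely exhibit an inequality gadget for every orientation of every $P_n$ --- which is the hard content you have deferred --- or restructure along these lines so that explicit constructions are needed only for $n\in\{2,3\}$.
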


In particular this means for directed paths $P$ of arbitrary length, there are orientations of planar graphs which  require four colors for a $P$-free coloring. This is in contrast to Neumann-Lara's Two-Color-Conjecture that any orientation of a planar graph admits an $2$-coloring without monochromatic directed cycle~\cite{NeumannLara1982}.

The proofs of Theorem~\ref{thm:2col_NPhard} and Theorem~\ref{thm:3col_NPhard} are separated into two steps, first we reduce the problems to cases of small $n$ in Section~\ref{sec:remove} and then describe explicit \NP-hardness reductions for paths of lengths $1$, $2$ and $3$ in Sections~\ref{sec:basecases2},~\ref{sec:basecases25}, and~\ref{sec:basecases3}.
In Section \ref{sec:outer} we show for every orientation $\vec{P}$ of a path the existence of an acyclic outerplanar digraph without a $\vec{P}$-free $2$-coloring. We use this side result later in the \NP-hardness reductions.

Note that the complexity of the $2$-$F$-PFC where the underlying undirected tree of $F$ has maximum degree at least $3$ remains partially open. We show that the problem is \NP-hard if the graph after successively deleting all leaves is a path of length $2$ or $3$. The complexity of those graphs where we get a star $K_{1,n}$ is not determined.

\paragraph{\textbf{Notation}} All digraphs considered in this paper are simple, that is, they do not contain loops nor parallel or anti-parallel arcs. An arc (or directed edge) in a digraph $D$ with tail $u$ and head $v$ is denoted by $(u,v)$. By $V(D)$ and $A(D)$ we denote, respectively, the set of vertices and arcs in the digraph $D$. If $D$ is a directed graph, the \emph{underlying graph} $U(D)$ is the undirected graph obtained from $D$ by ignoring the orientations of the edges. Vice-versa, if $G$ is an undirected graph, then any (simple) digraph $D$ with $U(D)=G$ is called an \emph{orientation} of $G$. A \emph{proper coloring} of an undirected graph $G$ is an assignment of colors to the vertices such that adjacent vertices receive distinct colors.

\section{Outerplanar digraphs forcing a monochromatic oriented path}\label{sec:outer}

In this section, we prove the following auxiliary result, which states the existence of acyclic outerplanar digraphs in which every $2$-coloring induces a monochromatic induced copy of a given oriented path.
We denote by an \emph{outerplanar digraph} an orientation of an outerplanar graph (i.e. a graph which can be drawn in the plane with no edges intersecting and all vertices on the outer face).

\begin{theorem}\label{outermonchromaticpaths}
	Let $\vec{P}$ be an oriented path. Then there exists an acyclic outerplanar digraph $\vec{O}(\vec{P})$ with the property that every $2$-coloring of its vertices contains a monochromatic induced copy of $\vec{P}$. 
\end{theorem}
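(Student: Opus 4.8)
The plan is to prove this by induction on the length of the oriented path $\vec{P}$, building the forcing gadget $\vec{O}(\vec{P})$ recursively. The base case is an oriented path of length $0$ or $1$ (a single vertex or a single arc), where the gadget can be taken to be an appropriately large independent set or oriented star-like structure; a single vertex is always a monochromatic induced copy, and for a single arc any $2$-coloring of a long enough oriented path already forces a monochromatic arc. The real content lies in the inductive step, where I would take the gadget for a shorter oriented subpath and amplify it so that it forces a monochromatic copy of the longer path.

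The key idea for the inductive step is the following. Suppose $\vec{P}$ has first vertex $v_1$ with the first arc oriented, say, from $v_1$ to $v_2$ (the reverse case is symmetric). Let $\vec{P}'$ be the oriented path obtained by deleting $v_1$. I would take many disjoint copies of the forcing gadget $\vec{O}(\vec{P}')$ and attach, to each copy, an additional apex-type vertex joined by an appropriately oriented arc so that whenever a copy of $\vec{P}'$ appears monochromatically inside $\vec{O}(\vec{P}')$, one can extend it at the endpoint to a full copy of $\vec{P}$ provided the attached vertex has the same color. The subtlety is the word \emph{induced}: attaching a vertex to one endpoint of $\vec{P}'$ must not create unwanted adjacencies that destroy the induced structure. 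To handle this I would attach the extension vertex only to the appropriate endpoint of each potential monochromatic $\vec{P}'$, which in general is not known in advance, so instead one attaches to \emph{all} vertices of the gadget in a controlled way or, better, designs the recursion so that the attachment point is the unique designated endpoint of the copy.

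The cleaner route, which I expect to be the one that actually works, is to force the monochromatic path together with a designated \emph{free} endpoint that is adjacent to nothing else in the gadget. That is, I would strengthen the induction hypothesis: the gadget $\vec{O}(\vec{P})$ forces not merely a monochromatic induced copy of $\vec{P}$, but one whose terminal vertex has a prescribed available ``docking'' slot, meaning no other vertex of the gadget is adjacent to that terminal vertex except along $\vec{P}$ itself. Then in the inductive step I can safely glue a new apex to that docking slot with the correct orientation, and by taking sufficiently many copies and using a pigeonhole/Ramsey-style argument on the two colors, I guarantee that in some copy the docking slot and the new apex receive the same color, yielding the longer monochromatic induced path with a fresh docking slot. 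Outerplanarity and acyclicity are preserved because each gluing step adds a pendant vertex along the outer boundary and orients the new arc consistently with a fixed acyclic ordering.

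The main obstacle will be maintaining the \emph{induced} condition through the recursion: I must ensure that the vertices used to build one monochromatic copy, together with the newly attached apex, induce exactly $\vec{P}$ and not a graph with extra edges, while simultaneously guaranteeing via the coloring argument that the colors line up. Balancing these two requirements---enough copies to force color agreement by pigeonhole, yet a sparse enough adjacency structure to keep copies induced and the graph outerplanar---is the crux, and I expect the careful bookkeeping of docking slots and the quantitative choice of how many copies to take at each level to be where most of the work goes.
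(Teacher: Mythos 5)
You have correctly identified the crux --- keeping copies induced while forcing color agreement at the attachment point --- but your mechanism for the inductive step fails, and the failure is fundamental rather than a matter of bookkeeping. A \emph{fixed} docking vertex is impossible: if $d$ is a designated vertex whose unique neighbour in the gadget is $w$, the adversary colors $d$ and $w$ differently and no monochromatic copy of $\vec{P}'$ can end at $d$. So the docking slot must depend on the coloring, and then no apex can be attached to it in advance; attaching a pendant apex to every potential slot and taking many disjoint copies does not help either, because the adversary has a uniform per-copy strategy: color each copy of $\vec{O}(\vec{P}')$ so as to avoid a monochromatic $\vec{P}$ (this is possible, since the inductive hypothesis only guarantees that $\vec{O}(\vec{P}')$ forces the \emph{shorter} path $\vec{P}'$; if it already forced $\vec{P}$ there would be nothing to prove), and color every pendant apex opposite to its unique neighbour. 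Every apex is then isolated inside its color class, so it lies on no monochromatic copy of a path of positive length, and no monochromatic $\vec{P}$ exists anywhere; no number of copies and no pigeonhole argument defeats this. The smallest instance already exhibits the failure: for $\vec{P}$ a single arc, $\vec{P}'$ is a single vertex, $\vec{O}(\vec{P}')$ is a single vertex, and your construction is a disjoint union of arcs, each of which the adversary colors bichromatically; what is actually needed there is an odd cycle. (Separately, your base case is wrong: a long oriented path does not force a monochromatic arc, since paths are bipartite.)

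The missing idea --- and the way the paper proceeds --- is to make the induction \emph{anchored and disjunctive} instead of forcing a free-standing copy of the shorter path. The paper's gadget is a nested tree of \emph{fans}: a fan consists of an induced copy of the \emph{full} path $\vec{P}$ together with a root joined to all $\ell+1$ of its vertices, and fans are hung, rooted at the existing vertices, at every vertex at distance $k$ from the global root $u$ for $k=0,1,\dots,\ell-1$. The invariant is a dichotomy: either some fan's path is monochromatic (and it is an induced copy of $\vec{P}$), or every fan's path contains both colors, in particular a vertex of color $c(u)$, so one can walk down one level at a time staying in color $c(u)$; after $\ell$ steps this yields a monochromatic path on $\ell+1$ vertices that uses exactly one vertex per level, which makes it induced, and it is isomorphic to $\vec{P}$ because the arcs between consecutive levels are oriented so that every induced path from $u$ down to distance $\ell$ is a copy of $\vec{P}$. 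Joining the root to \emph{all} vertices of an entire copy of $\vec{P}$ is exactly what your docking-slot idea was trying to avoid, yet it is what resolves the tension: the extra adjacencies never hurt, because the path one eventually constructs meets each fan in a single vertex.
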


\begin{figure}[htb]
	\centering
	\includegraphics[scale = 0.6,page = 1]{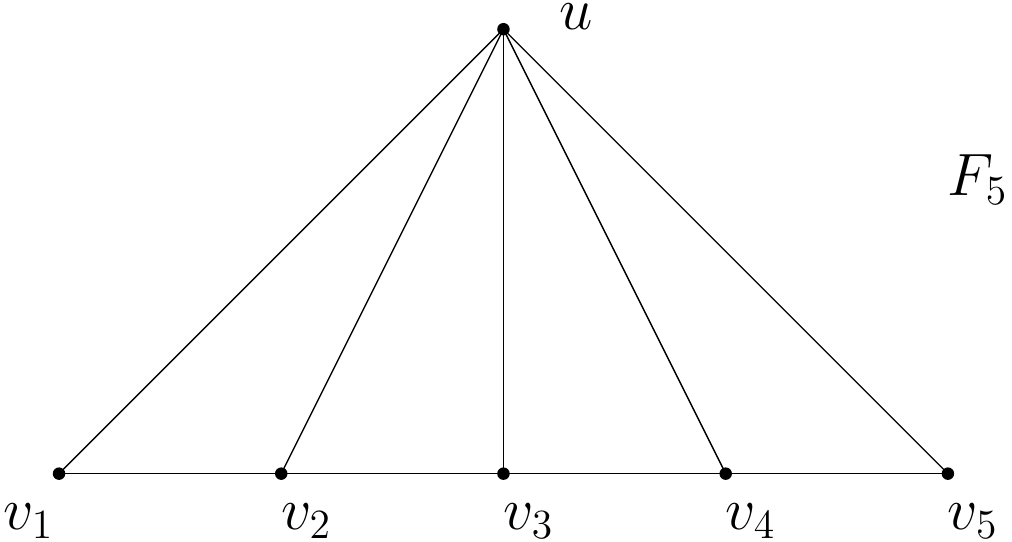}
	\caption{The fan $F_5$ with a path of length $4$ and the root $u$.}
	\label{fig:Fan}
\end{figure}

\begin{figure}
	\centering
	\includegraphics[scale = 0.5, page = 2]{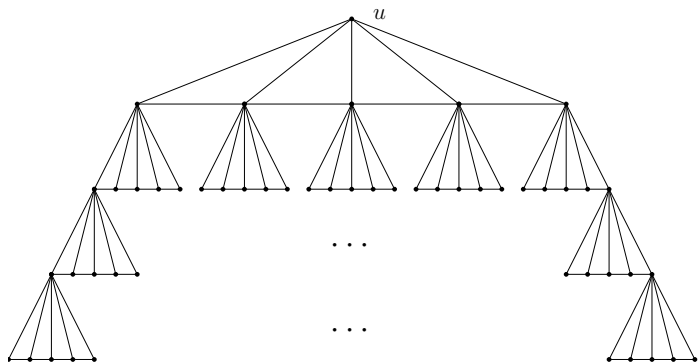}
	\caption{Illustration of $O^{4}$ with root $u$. }
	\label{fig:TreeOfFans}
\end{figure}
\begin{proof}
	Let $\vec{P} \in \mathcal{P}_{\ell+1}$ an oriented path of length $\ell$.
	We construct an outerplanar graph starting with a fan $F = F_{\ell +1}$ consisting of a path on $\ell +1$ vertices $v_1, \ldots, v_{\ell+1}$ and an additional vertex $u$ which is adjacent to every vertex of the path. We call this additional vertex $u$ the root. For an illustration see Figure~\ref{fig:Fan}.
	Starting from this fan $O^{1} = F$ with root $u$ we recursively 
	construct a graph $O^{k+1}$ from the graph $O^k$ for every $k \leq \ell -1$. 
	For this we add a copy $F_v$ of the fan for every vertex $v \in V(O^k)$ which has distance $k$ from the vertex $u$. 
	The resulting graph $O^{\ell}$ (see Figure \ref{fig:TreeOfFans}) has vertices with distance $\ell$ from $u$. 
	Clearly this graph $O^{\ell}$ is outerplanar. 
	
	We now orient the edges of the graph $O^{\ell}$ depending on the oriented path $\vec{P}$. 
	The edges of the path in every copy of the fan $F$ are oriented in such a way that they are isomorphic to the considered path $\vec{P}$. Furthermore in every copy of the fan we orient the arcs from the root vertex of this copy to the other vertices in such a way that the induced path connecting every vertex with distance $\ell$ to the vertex $u$ of $O^{\ell}$ is isomorphic to the considered path $\vec{P}$. 
	We call the so constructed acyclic outerplanar digraph $\vec{O}(\vec{P})$.
	
	We now consider 2-colorings of $\vec{O}(\vec{P})$ and show that every $2$-coloring contains an induced monochromatic subgraph isomorphic to $\vec{P}$. 
	Since there is an induced copy of $\vec{P}$ in every copy of the fan, we need two colors to color this induced path in order to avoid a monochromatic induced $\vec{P}$. 
	Now we look at the color $c(u) \in \{ 0,1\}$ of the root. In every induced path isomorphic to $\vec{P}$ in a copy of a fan $F_v$ at least one vertex is colored with $c(u)$. Hence there is a monochromatic path from $u$ to a vertex with distance $\ell$ from $u$. Since we defined the orientation in such a way that these path are isomorphic to $\vec{P}$, every 2-coloring of $\vec{O}(\vec{P})$ contains a monochromatic $\vec{P}$.
\end{proof}

\section{Removing leaves}\label{sec:remove}

In this section, we show the following two results, which will be used as the ``inductive steps'' in the proofs of Theorem~\ref{thm:2col_NPhard} and Theorem~\ref{thm:3col_NPhard}, respectively.
We need the following notation: Given an oriented tree $\vec{T}$, we denote by $\rm{lrem}(\vec{T})$ the oriented tree obtained from $\vec{T}$ by deleting all its leaves.

\begin{proposition}\label{remove2col}
	Let $\vec{T}$ be an oriented tree. Then the $2$-${\rm lrem}(\vec{T})$-PFC reduces polynomially to the $2$-$\vec{T}$-PFC. This remains true for the restrictions of both problems to acyclic digraphs.
\end{proposition}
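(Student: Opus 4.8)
The plan is to give a many-one reduction that, from an instance $D$ of the $2$-${\rm lrem}(\vec{T})$-PFC, builds in polynomial time a planar digraph $D'$ such that $D$ admits an ${\rm lrem}(\vec{T})$-free $2$-coloring if and only if $D'$ admits a $\vec{T}$-free $2$-coloring, keeping $D'$ acyclic whenever $D$ is acyclic. Write $\vec{S} := {\rm lrem}(\vec{T})$, so that $\vec{T}$ is obtained from $\vec{S}$ by attaching to its vertices some leaves, each joined by an arc of a prescribed orientation. The idea is to equip every vertex of $D$ with gadgets that, in any $\vec{T}$-free coloring, supply same-colored ``spare leaves'' of both orientations, so that a monochromatic induced copy of $\vec{S}$ inside $D$ can always be completed to a monochromatic induced copy of $\vec{T}$.

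Concretely, fix a digraph $W$ on $|V(\vec{T})|$ vertices whose induced digraph is isomorphic to $\vec{T}$. For every $v \in V(D)$ I attach one fresh \emph{out-gadget} per out-leaf of $\vec{T}$ and one fresh \emph{in-gadget} per in-leaf of $\vec{T}$, where an out-gadget is a fresh copy of $W$ together with all arcs $(v,w)$, $w \in W$, and an in-gadget is a fresh copy of $W$ together with all arcs $(w,v)$, $w \in W$. Each gadget is attached only at $v$, so its vertices are adjacent in $D'$ only to $v$ and to the other vertices of the same gadget. Since $W \cong \vec{T}$ is a fixed tree, $\{v\}\cup W$ is planar ($K_1$ joined to a tree) and can be drawn with $v$ on the outer boundary; hence all gadgets embed in faces incident to $v$ without crossings, so $D'$ is planar. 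Each gadget has constantly many vertices and is acyclic (its arcs either lie inside the acyclic $W$ or are all oriented out of, resp. into, the added vertex $v$), and no directed cycle through $v$ is created, so $D'$ is acyclic whenever $D$ is. Finally $|V(D')| = O(|V(D)|)$, so the construction is polynomial.

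For the backward direction, suppose $c'$ is a $\vec{T}$-free $2$-coloring of $D'$ and let $c$ be its restriction to $D$; I claim $c$ is $\vec{S}$-free. If not, there is a monochromatic induced copy $X \cong \vec{S}$ in $D$, say of color $0$. For each $v \in X$ and each leaf to be attached at the corresponding vertex of $\vec{S}$, consider the matching gadget copy $W$ at $v$: because $c'$ is $\vec{T}$-free and $W \cong \vec{T}$, the set $W$ is not monochromatic, so both colors occur in $W$ and in particular $v$ has a gadget-neighbor $w$ of color $c'(v)=0$ with the correct arc orientation. Choosing one such $w$ from a distinct gadget for each leaf slot yields vertices that are pairwise non-adjacent and adjacent to $X$ only at $v$; hence $X$ together with these leaves induces a monochromatic copy of $\vec{T}$ in $D'$, contradicting $\vec{T}$-freeness. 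Thus $c$ is $\vec{S}$-free.

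For the forward direction, start from an $\vec{S}$-free $2$-coloring $c$ of $D$, keep it on $D$, color each gadget copy $W$ non-monochromatically, and argue that no monochromatic induced $\vec{T}$ survives. A copy lying entirely in $D$ would, on its non-leaf vertices, induce a monochromatic copy of $\vec{S}$, which $c$ forbids. A copy meeting a gadget must, since gadget vertices only see $v$ and their own gadget while $\vec{T}$ is connected, contain the attachment vertex $v$; here two observations control the remaining cases: (i) $v$ is universal to each gadget set $W$, so a copy using two or more vertices of one gadget would exhibit a vertex adjacent to all others of a $|V(\vec{T})|$-set, impossible in the non-star tree $\vec{T}$; and (ii) a copy using gadget vertices only as leaves projects, after deleting those leaves, to a monochromatic induced copy of $\vec{T}$ minus some leaves, which still contains an induced copy of $\vec{S}$ inside $D$, again forbidden. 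The main obstacle is exactly this forward analysis: making the case distinction airtight—in particular ruling out copies with \emph{internal} gadget vertices, and treating degenerate $\vec{T}$ such as stars, where $\vec{S}$ collapses to a single vertex—is where the orientation of the gadget arcs and the non-star structure of $\vec{T}$ must be used carefully, whereas the backward direction and the planarity/acyclicity bookkeeping are routine by comparison.
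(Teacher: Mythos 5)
Your construction is essentially the paper's: the paper attaches, to every vertex $v$ of $D$, $\ell$ out-copies and $\ell$ in-copies of $\vec{T}$ (where $\ell$ is the total number of leaves of $\vec{T}$), fully joined to $v$ with the prescribed orientations; your per-leaf-slot count is an inessential economy, and your planarity, acyclicity and size bookkeeping, as well as your backward direction, match the paper's argument. The gap is in the forward direction, exactly where you flagged it. Your claim (i) is unsound: a monochromatic induced copy $X'$ of $\vec{T}$ meeting a gadget $W$ at $v$ in two or more vertices need not be contained in $\{v\}\cup W$, so no vertex of $X'$ need be adjacent to all others; worse, such copies cannot be excluded at all, because they genuinely occur --- if $\vec{T}$ has two out-leaves at the same vertex, a monochromatic copy may legitimately use two non-adjacent, equally colored vertices of a single out-gadget as precisely those two leaves. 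You also cannot lean on ``the non-star tree $\vec{T}$'': the proposition is claimed for every oriented tree, stars included, and your reduction must (and does) work for them, just not by this argument. What actually needs proving is that every gadget vertex of $X'$ is a \emph{leaf} of $D'[X']$, after which your projection step (ii) finishes the proof; with gadgets colored merely ``non-monochromatically'' (which still allows monochromatic arcs inside a gadget) this is not automatic, and you give no valid argument for it.

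There are two short repairs. The paper's choice: color each gadget copy \emph{properly}, according to the bipartition of the underlying tree $T$, rather than just non-monochromatically. Then every arc inside a gadget is bichromatic, so each gadget vertex is incident to at most one monochromatic arc (the one to its attachment vertex $v$) and therefore has degree at most one in any monochromatic induced subdigraph --- it is forced to be a leaf, with no case analysis at all. Alternatively, your weaker coloring can be salvaged: a copy $X'$ lying inside a single gadget is impossible, since $|X'|=|V(W)|$ would force $X'=W$, which is non-monochromatic; otherwise connectivity of $\vec{T}$ forces $v\in X'$, and then two adjacent gadget vertices in $X'$ would form, together with $v$ (which is universal to its gadget), a triangle in the underlying graph of $D'[X']$, impossible since that graph is a tree; hence the gadget vertices in $X'$ are pairwise non-adjacent, each with $v$ as its unique neighbour in $X'$, i.e.\ leaves. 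Either fix is a few lines, but as written your case (i) is both incorrectly reasoned and aimed at excluding configurations that must instead be absorbed, as leaves, by your case (ii).
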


We obtain a similar result for the $3$-coloring problem. 

\begin{proposition}\label{remove3col}
	Let $\vec{P} \in \mathcal{P}_n$, $n \ge 4$. Then the $3$-${\rm lrem}(\vec{P})$-PFC reduces polynomially to the $3$-$\vec{P}$-PFC. This remains true for the restrictions of both problems to acyclic digraphs.
\end{proposition}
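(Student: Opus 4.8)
The plan is to give a polynomial reduction transforming an instance $D'$ of the $3$-$\vec{Q}$-PFC, where $\vec{Q} := \mathrm{lrem}(\vec{P})$ is the orientation of $P_{n-2}$ obtained by deleting the two endpoints $p_1,p_n$ of $\vec{P} = p_1 p_2 \cdots p_n$, into an instance $D$ of the $3$-$\vec{P}$-PFC such that $D'$ admits a $\vec{Q}$-free $3$-coloring if and only if $D$ admits a $\vec{P}$-free $3$-coloring. The idea is to attach to every vertex $v$ of $D'$ two \emph{leaf-forcing} gadgets, one for each endpoint of $\vec{P}$: I take two fresh copies $A_v^{\mathrm{s}}, A_v^{\mathrm{e}}$ of the acyclic outerplanar digraph $\vec{O}(\vec{P})$ provided by Theorem~\ref{outermonchromaticpaths}, make $v$ adjacent to \emph{all} of their vertices, and orient all arcs between $v$ and $A_v^{\mathrm{s}}$ (resp.\ $A_v^{\mathrm{e}}$) uniformly, so that any vertex of $A_v^{\mathrm{s}}$ can play the role of the leaf $p_1$ attached at $p_2=v$ (resp.\ any vertex of $A_v^{\mathrm{e}}$ the leaf $p_n$ attached at $p_{n-1}=v$), with the orientation inherited from the corresponding end-edge of $\vec{P}$. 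Since $\vec{O}(\vec{P})$ is outerplanar, $v$ together with one gadget is an apex over an outerplanar graph and can be drawn inside a face of $D'$ incident to $v$, so $D$ stays planar; and since all arcs to $v$ within one gadget point the same way while $\vec{O}(\vec{P})$ is acyclic, $D$ stays acyclic whenever $D'$ does. The construction is clearly polynomial, as each gadget has size depending only on the fixed path $\vec{P}$.

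For the forward implication (a $\vec{P}$-free $3$-coloring $c$ of $D$ restricts to a $\vec{Q}$-free $3$-coloring of $D'$) I argue by contradiction. Given a monochromatic induced copy of $\vec{Q}$ on vertices $x_1,\dots,x_{n-2}$ of color $i$, with $x_1,x_{n-2}$ playing $p_2,p_{n-1}$, I extend it by a monochromatic leaf at each end. The key point is that $A_{x_1}^{\mathrm{s}}$ must contain a vertex of color $i$: otherwise $A_{x_1}^{\mathrm{s}}$ is colored with only the two colors different from $i$, hence by Theorem~\ref{outermonchromaticpaths} contains a monochromatic induced copy of $\vec{P}$, which remains induced in $D$ (its color differs from $i=c(x_1)$, so $x_1$ is not among its vertices), contradicting $\vec{P}$-freeness. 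Such a vertex $y_1 \in A_{x_1}^{\mathrm{s}}$ is adjacent to $x_1$ with the correct orientation and to no other $x_j$; symmetrically $A_{x_{n-2}}^{\mathrm{e}}$ yields $y_2$. Then $\{y_1,x_1,\dots,x_{n-2},y_2\}$ induces a monochromatic copy of $\vec{P}$ in $D$, the desired contradiction.

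For the backward implication I extend a $\vec{Q}$-free $3$-coloring $c'$ of $D'$ by coloring every gadget with a \emph{proper} $3$-coloring (possible since $\vec{O}(\vec{P})$ is outerplanar, hence $3$-colorable, and a proper coloring has no monochromatic edge). I claim the result is $\vec{P}$-free. The crucial observation, and what makes the construction work, is that $v$ is adjacent to every vertex of each gadget attached to it: on any induced path a gadget vertex therefore has no same-colored path-neighbor inside its gadget (proper coloring) and can be adjacent along the path only to its apex $v$, so it must be a path endpoint. Consequently, in a hypothetical monochromatic induced copy of $\vec{P}$ in $D$, only the two endpoints can be gadget vertices, so the $n-2$ middle vertices all lie in $D'$ and induce a monochromatic copy of $\vec{Q}$ there, contradicting $\vec{Q}$-freeness of $c'$.

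The step I expect to be most delicate is exactly this interplay between the two directions: the gadget must force a monochromatic neighbor (so the color $c(v)$ must actually occur on $v$'s gadget in every $\vec{P}$-free coloring), yet must not itself create monochromatic copies of $\vec{P}$ threading through $v$. Both requirements are reconciled by making $v$ \emph{universal} over the outerplanar forcing gadget, which simultaneously triggers the $2$-coloring argument of Theorem~\ref{outermonchromaticpaths} and confines gadget vertices to the endpoints of any induced path. I would therefore take particular care in verifying the endpoint argument and the inducedness claims of both implications, together with the routine but necessary planarity and acyclicity of the apex attachment.
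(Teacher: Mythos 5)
Your proof is correct and follows essentially the same route as the paper: the paper likewise attaches to every vertex two apex-joined copies of the acyclic outerplanar digraph $\vec{O}(\vec{P})$ from Theorem~\ref{outermonchromaticpaths} (one with all arcs out of $v$, one with all arcs into $v$), uses the nonexistence of a $\vec{P}$-free $2$-coloring of the gadget to force the extending leaf colors, and uses a proper $3$-coloring of the outerplanar gadgets plus the ``gadget vertices must be path endpoints'' argument for the converse. The only cosmetic difference is that you orient the gadget--apex arcs according to the two end-edges of $\vec{P}$ rather than always taking one all-in and one all-out copy, which changes nothing in the argument.
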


\begin{proof}[Proof of Proposition~\ref{remove2col}]
	Suppose we are given a planar digraph $D$ as an instance of the $2$-${\rm lrem}(\vec{T})$-PFC. 
	Let $\ell$ be the number of leaves of $\vec{T}$. We construct a digraph $D'$ obtained from $D$ by adding for each vertex $v \in V(D)$ the disjoint copies $\vec{T}_{1,v}^+, \ldots, \vec{T}_{\ell,v}^+, \vec{T}_{1,v}^-, \ldots ,\vec{T}_{\ell,v}^- $ of $\vec{T}$ to $D$ and adding for all $2\ell$ disjoint copies all the arcs $(v,u)$, $u \in V(\vec{T}_{j,v}^+)$ and $(u,v)$, $u \in V(\vec{T}_{j,v}^-)$, $j = 1, \ldots, \ell$. The digraph $D'$ is still planar, since each of the attached copies $\vec{T}_{j,v}^+, \vec{T}_{j,v}^-$, $v \in V(D)$, $j = 1, \ldots, \ell$ is a tree and therefore an outerplanar graph. 

\paragraph{\textbf{Claim}} $D'$ admits a $\vec{T}$-free $2$-coloring if and only if $D$ admits a ${\rm lrem}(\vec{T})$-free $2$-coloring. 

	\begin{proof}
		Suppose for the first direction we are given a $\vec{T}$-free $\{0,1\}$-coloring $c'$ of $D'$. Let $c\mathop{:=}c'|_{V(D)}$ be the $2$-coloring induced by $c'$ on $D$. We claim that $c$ is ${\rm lrem}(\vec{T})$-free. Towards a contradiction assume that for some $i \in \{0,1\}$ there is $X \subseteq c^{-1}(i) \subseteq V(D)$ such that $D[X]$ is isomorphic to ${\rm lrem}(\vec{T})$. Now, since $c'$ is a $\vec{T}$-free coloring of $D'$, in each of the copies $\vec{T}_{j,v}^+, \vec{T}_{j,v}^-$ for $v \in V(D)$ and $j = 1, \ldots, \ell$ there must exist vertices of both colors. By picking a vertex of color $i$ from each of these copies and adding them to $X$ we obtain a monochromatic vertex-set $X'$ in $(D',c')$ such that $D'[X']$ is isomorphic to the digraph obtained from ${\rm lrem}(\vec{T})$ by attaching to every vertex $t \in V({\rm lrem}(\vec{T}))$ new vertices $t_1^+, \ldots, t_{\ell}^+, t_1^-, \ldots, t_{\ell}^-$ with the arcs $(t,t_j^+)$ and $(t_j^-,t)$ for all $j = 1, \ldots, \ell$. Clearly, this digraph contains a copy of $\vec{T}$ as an induced subdigraph and hence $(D',c')$ contains a monochromatic copy of $\vec{T}$, which contradicts our assumption on $c'$. Hence, $c$ is indeed ${\rm lrem}(\vec{T})$-free. This shows the first implication. 

		For the reverse direction, suppose $c\mathop{:}V(D) \rightarrow \{0,1\}$ is an ${\rm lrem}(\vec{T})$-free $2$-coloring of $D$. We extend this to a $2$-coloring $c'$ of $D'$ by properly coloring the vertices within each copy $\vec{T}_{j,v}^+,\vec{T}_{j,v}^-$, $j = 1\ldots, \ell$, of $\vec{T}$ according to the bipartition of the underlying tree $T$. 
		We claim that $c'$ is $\vec{T}$-free. Suppose that for some $i \in \{0,1\}$ there is $X' \subseteq V(D')$ such that $D'[X']$ is isomorphic to $\vec{T}$. By definition of $c'$, every vertex in $X' \setminus V(D)$ is incident to at most one monochromatic arc and hence must be a leaf of $D'[X']$. We conclude that $X\mathop{:=}X' \cap V(D)$ is a monochromatic vertex-set in $(D,c)$ such that $D[X]$ is isomorphic to a digraph obtained from $\vec{T}$ by removing some of its leaves. Hence, $D[X]$ contains a monochromatic copy of ${\rm lrem}(\vec{T})$ as an induced subgraph. However, this contradicts our initial assumption on the coloring $c$ of $D$, and shows that indeed $c'$ defines a $\vec{T}$-free $2$-coloring of $D'$. This finishes the proof of the claimed equivalence.  \phantom\qedhere
		\hfill $\triangle$
	\end{proof}

	Since the sizes of $D$ and $D'$ are linearly related, we have found a polynomial reduction of the $2$-${\rm lrem}(\vec{T})$-PFC to the $2$-$\vec{T}$-PFC. This concludes the proof of the first part of the proposition. For the second claim in the proposition it suffices to verify that $D'$ is acyclic if and only if $D$ is acyclic, since then we can use the same polynomial reduction to also reduce the $2$-${\rm lrem}(\vec{T})$-PFC restricted to acyclic inputs to the $2$-$\vec{T}$-PFC with acyclic inputs. However, this directly follows since $D$ is an induced subdigraph of $D'$, and since each of the copies $\vec{T}_{j,v}^+,\vec{T}_{j,v}^-$, $v \in V(D)$, $j = 1 , \ldots, \ell$ of $\vec{T}$ themselves are clearly acyclic and separated from the rest of the graph by directed edge-cuts. This shows the second claim in the proposition and concludes the proof.
\end{proof}

The following proof of Proposition~\ref{remove3col} works analogously to the previous proof, except that we attach copies of the outerplanar digraphs described in Section~\ref{sec:outer}.

\begin{proof}[Proof of Proposition~\ref{remove3col}]
Suppose we are given a planar digraph $D$ as an instance of the $3$-${\rm lrem}(\vec{P})$-PFC. Let $D'$ be the digraph obtained from $D$ by adding for each vertex $v \in V(D)$ two disjoint copies $\vec{O}_v^+, \vec{O}_v^-$ of the outerplanar acyclic digraph $\vec{O}(\vec{P})$ from Theorem~\ref{outermonchromaticpaths} to $D$ and adding all the arcs $(v,u)$, $u \in V(\vec{O}_v^+)$ and $(u,v)$, $u \in V(\vec{O}_v^-)$. The digraph $D'$ is still planar, since each of the attached copies $\vec{O}_v^+, \vec{O}_v^-, v \in V(D)$ is outerplanar. 

The following claim follows analogously to the proof of Proposition~\ref{remove2col} using the fact that each outerplanar graph has chromatic number at most $3$ and that in each of the copies  $\vec{O}_v^+, \vec{O}_v^-$ for all $v \in V(D)$ all three colors are needed, see Proposition~\ref{outermonchromaticpaths}.
  
	\paragraph{\textbf{Claim}} $D'$ admits a $P$-free $3$-coloring if and only if $D$ admits an ${\rm lrem}(P)$-free $3$-coloring. \\

	Since the sizes of $D$ and $D'$ are linearly related, we have found a polynomial reduction of the $3$-${\rm lrem}(\vec{P})$-PFC to the $3$-$\vec{P}$-PFC. This concludes the proof of the first part of the proposition. For the second claim in the proposition it suffices to verify that $D'$ is acyclic if and only if $D$ is acyclic, since then we can use the same polynomial reduction to also reduce the $3$-${\rm lrem}(\vec{P})$-PFC restricted to acyclic inputs to the $3$-$\vec{P}$-PFC with acyclic inputs. However, this directly follows since $D$ is an induced subdigraph of $D'$, and since each of the copies $\vec{O}_v^+,\vec{O}_v^-$, $v \in V(D)$ of $\vec{O}(\vec{P})$ themselves are acyclic (as guaranteed by Theorem~\ref{outermonchromaticpaths}) and separated from the rest of the graph by directed edge-cuts. This shows the second claim in the Proposition and concludes the proof.
\end{proof}

\section{$P$-free $2$-coloring for paths of length $2$}\label{sec:basecases2}

In this section we show Theorem \ref{thm:2col_NPhard} for $n=3$. 
\begin{proposition}\label{3path2colors}
	For every $\vec{P} \in \mathcal{P}_3$, the $2$-$\vec{P}$-PFC is \NP-hard, even restricted to acyclic inputs.	
\end{proposition}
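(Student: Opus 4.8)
The plan is to exhibit a polynomial-time reduction from a known \NP-complete problem, while first exploiting the symmetry of $\mathcal{P}_3$ to cut down the number of cases. I would begin by observing that reversing all arcs of a digraph is a bijection on (acyclic, planar) instances that carries $\vec{P}$-free $2$-colorings to $\revvec{P}$-free $2$-colorings; hence the $2$-$\vec{P}$-PFC and the $2$-$\revvec{P}$-PFC are polynomially equivalent. Up to isomorphism $\mathcal{P}_3$ consists of the directed path $a\to b\to c$ (which is isomorphic to its own reverse) and the two ``cherries'' $a\leftarrow b\to c$ and $a\to b\leftarrow c$ (which are reverses of one another). Thus it suffices to prove \NP-hardness for two representatives: the directed path and the out-cherry.

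For each representative I would reduce from a planar \NP-complete constraint problem, concretely planar $3$-satisfiability, building a planar acyclic digraph $D$ whose $\vec{P}$-free $2$-colorings correspond exactly to satisfying assignments. The observation driving the encoding is that for an \emph{induced} directed path $a\to b\to c$ the only forbidden monochromatic pattern is ``all three vertices equal'', so a $\vec{P}$-free $2$-coloring is precisely one in which every designated induced directed $P_3$ is non-monochromatic; this not-all-equal style constraint is what I can use to build logic gadgets. Concretely I would design (i) a \emph{variable gadget}: a small planar acyclic digraph with two terminal vertices whose colors are forced to be opposite in every $\vec{P}$-free coloring, encoding the two truth values; (ii) a \emph{clause gadget} on three literal-terminals that admits a $\vec{P}$-free coloring if and only if its three literals are not simultaneously ``false''; and (iii) \emph{copy/wire gadgets} transporting the color of a variable terminal to each of its literal occurrences. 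To create the required rigidity I would attach copies of the forcing digraphs $\vec{O}(\vec{P})$ of Theorem~\ref{outermonchromaticpaths} (or of $\vec{P}$ itself) to selected vertices: since such an attached copy has no $\vec{P}$-free $2$-coloring of its own, it forces both colors to appear in its neighborhood and thereby fixes reference colors and locally breaks the color symmetry. Planarity of $D$ follows from laying the gadgets out along the planar incidence structure of the $3$-SAT instance, and acyclicity from orienting all arcs consistently with a fixed topological layering of the gadgets.

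The out-cherry case would be handled by the same scheme with gadgets adapted to the orientation: there the forbidden monochromatic pattern is a vertex together with two of its out-neighbors that are mutually non-adjacent, and the variable and clause gadgets are re-oriented accordingly. The main obstacle I anticipate is gadget correctness rather than the global layout: because only \emph{induced} copies of $\vec{P}$ are forbidden, every arc added between gadgets, or omitted, can create or destroy a copy of $\vec{P}$, so I must verify that no unintended induced directed $P_3$ (respectively cherry) straddles a gadget boundary, while simultaneously keeping the construction planar and, crucially, acyclic. Reconciling these three demands, namely exact local constraint enforcement, planarity, and acyclicity, is the delicate part; the forcing digraphs of Section~\ref{sec:outer}, whose acyclicity and outerplanarity are guaranteed by Theorem~\ref{outermonchromaticpaths}, are the key tool that lets the enforcer gadgets be inserted without violating either global property.
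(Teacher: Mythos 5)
Your opening step---using arc reversal to cut $\mathcal{P}_3$ down to the two representatives $\vec{P}_3$ and one of the two ``cherries''---is exactly the paper's first move, and reducing from a SAT variant via variable, wire and clause gadgets is the right general scheme. However, two of the devices your plan relies on fail, and they fail for reasons of substance, not detail. First, you cannot attach copies of $\vec{O}(\vec{P})$ in a \emph{two}-coloring reduction. By Theorem~\ref{outermonchromaticpaths}, every $2$-coloring of $\vec{O}(\vec{P})$ contains a monochromatic induced copy of $\vec{P}$, and such a copy remains induced and monochromatic in any digraph that contains the attached copy as an induced subdigraph. Hence every instance you build this way has no $\vec{P}$-free $2$-coloring whatsoever, and your reduction outputs only trivial ``no''-instances. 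An attached copy of $\vec{O}(\vec{P})$ does not ``force both colors to appear in its neighborhood''; that mechanism belongs to the paper's \emph{three}-coloring reductions (Proposition~\ref{remove3col} and Proposition~\ref{23path3colors}), where the attached digraphs are $3$-colorable and merely have no $\vec{P}$-free $2$-coloring. With two colors, the only legitimate enforcers are induced copies of $\vec{P}$ itself, which is what the paper's fan-shaped negator gadgets are built from.

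Second, the three-terminal clause gadget you specify (``extendable iff the three literals are not simultaneously false'') cannot exist. For any gadget, the set of precolorings of its terminals that extend to a $\vec{P}$-free $2$-coloring is invariant under swapping the two colors (swap the colors of an extension), so a gadget that rejects ``all terminals colored $0$'' must also reject ``all terminals colored $1$''. In a $2$-coloring, ``false'' has no absolute meaning; one needs a reference terminal. The paper's clause gadget (Lemma~\ref{lem:clauseP3}) accordingly has a fourth terminal $t'$ with the swap-invariant condition $c(t')\in\{c(x'),c(y'),c(z')\}$, plus a single global vertex $t$ joined by extenders to every clause gadget so that ``true'' means the same color everywhere. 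This global vertex is exactly why your planarity claim collapses: even if you start from PLANAR $3$-SAT, the wires from $t$ to all clause gadgets create crossings, so a crossover gadget is indispensable---the paper remarks on this explicitly and constructs such a gadget, whereas your proposal has none and asserts that planarity follows from the layout. (A purely symmetric, not-all-equal encoding would avoid the reference vertex, but planar NAE-$3$-SAT is polynomial-time solvable, so that route cannot yield hardness on a planar layout either.)
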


\begin{proof}

Up to isomorphism there are three different orientations of the path $P_3$. The directed path $\vec{P}_3$, an orientation where the middle vertex of $P_3$ is a sink, denoted by $\vec{V}_3$, and the one where the middle vertex is a source, denoted by $\revvec{V}_3$, see Figure \ref{fig:OrientationP3}. The latter two oriented paths are equivalent up to the reversal of all arcs, and hence it suffices to prove that $\vec{P}_3$-free $2$-coloring and $\vec{V}_3$-free $2$-coloring of planar acyclic digraphs is \NP-hard.
To prove these results, we will reduce the $3$-SAT problem to each of the two problems. For this we need to introduce some gadgets. 

\begin{figure}[htb]
	\centering
	\includegraphics{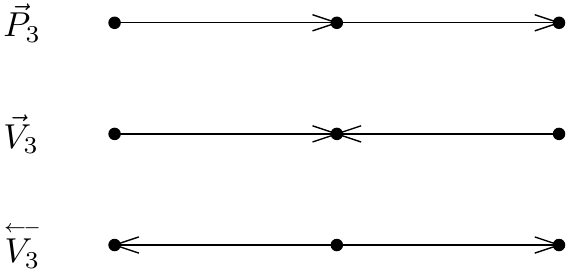}
	\caption{The three different orientations of the path $P_3$.}
	\label{fig:OrientationP3}
\end{figure}

\paragraph{\textbf{\NP-hardness of the $2$-$\vec{P}_3$-PFC}} First we consider the orientation $\vec{P}_3$. The negator gadget as depicted in Figure \ref{fig:negatorP3} forces the vertices $x$ and $y$ to have different colors.
\begin{lemma} \label{lem:negator2P3free}
	The negator gadget in Figure~\ref{fig:negatorP3} has the following properties
	\begin{itemize}
		\item In every $\vec{P}_3$-free 2-coloring of the negator gadget, the vertices $x$ and $y$ must receive different colors. 
		\item There is a $\vec{P}_3$-free 2-coloring of the negator gadget in which the unique incident edges of $x$ and $y$ are bichromatic. 
		\item The negator gadget is acyclic. 
	\end{itemize}
\end{lemma}

\begin{figure}[htb]
	\centering
	\includegraphics[page = 3,scale= 0.7]{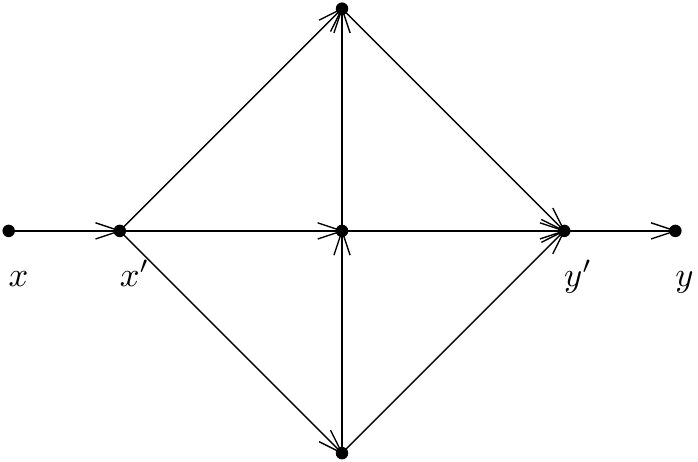} \\[2ex]
	\begin{subfigure}[b]{0.48\textwidth}
		\centering
		\includegraphics[page = 1,scale= 0.7]{figs/Negator} 
		\caption{}
		\label{fig:negatorP3}
	\end{subfigure}
	\begin{subfigure}[b]{0.48\textwidth}
		\centering
		\includegraphics[page = 4,scale = 0.7 ,valign = c]{figs/Negator}
		\caption{}
		\label{fig:NegatorV3}
	\end{subfigure}
\caption{Negator gadget for (a) $\vec{P_3}$-free colorings and (b) for $\vec{V}_3$-free colorings.}
\end{figure}

\begin{proof}
	Assume towards a contradiction that $x$ and $y$ are both colored with the same color $i \in \{0,1\}$. The three vertices $v_1,v_2,v_3$ on the vertical path in the gadget (see Figure~\ref{fig:negatorP3}) induce a $\vec{P}_3$, hence both colors have to be used on these three vertices. The vertex colored with $i$ forces $x'$ and $y'$ to be colored with color $j \neq i$. But there is a vertex colored with $j$ in the vertical path, which is a contradiction since we have a monochromatic induced $\vec{P}_3$.
	Hence $c(x) \neq c(y)$ for all $\vec{P}_3$-free 2-colorings~$c$. 
	It is easy to see that the negator gadget is acyclic and there is a $2$-coloring with bichromatic edges $xx'$ and $yy'$.
\end{proof}

If we have the negator two times in a row, we can force two vertices to have the same color.
We call this gadget as depicted in Figure \ref{fig:extender} the \emph{extender gadget}. We connect the two negator gadgets with endpoint $x_1,y_1$ and $x_2,y_2$ by identifying $y_1$ and $y_2$ such that the horizontal paths have reverse directions. 
\begin{figure}[htb]
	\centering
	\includegraphics[page = 2,scale= 0.7]{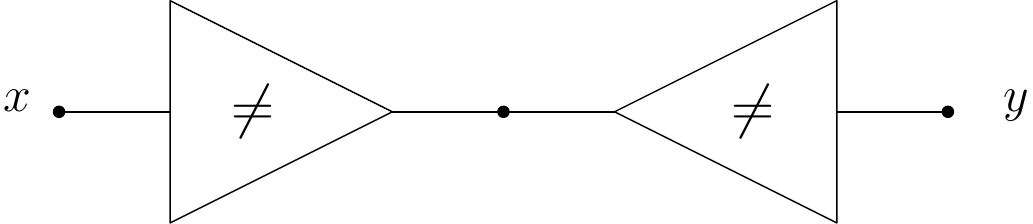} \\[2ex]
	\includegraphics[page = 1,scale= 0.7]{figs/Extender}
	\caption{Extender gadget for $\vec{P_3}$-free and $\vec{V}_3$-free colorings.}
	\label{fig:extender}
\end{figure}
It follows directly from Lemma \ref{lem:negator2P3free} that the extender gadget fulfills the following properties.
\begin{corollary}\label{cor:extenderP3} 
	The extender gadget in Figure~\ref{fig:extender} has the following properties
	\begin{itemize}
		\item In every $\vec{P}_3$-free 2-coloring of the extender gadget the vertices $x$ and $y$ must receive the same color.
		\item There is a $\vec{P}_3$-free 2-coloring in which the unique incident edges of $x$ and $y$ are bichromatic. 
		\item The extender gadget is acyclic.
	\end{itemize}
\end{corollary}

Another gadget we will need is the \emph{crossover-gadget}. 
This gadget forces two pairs of antipodal vertices to have the same color. 

\begin{lemma} \label{lem:crossoverP3}
	The crossover gadget as depicted in Figure~\ref{fig:ColoringCrossoverP3} has the following properties:
	\begin{itemize}
		\item In every $\vec{P}_3$-free 2-coloring of the crossover gadget the vertices $x$ and $x'$ as well as the vertices $y$ and $y'$ have the same color. 
		\item For every assignment $p\mathop{:}\{x,x',y,y'\} \to \{0,1\}$ with $p(x) = p(x')$ and $p(y) = p(y')$ there exists a $\vec{P}_3$-free 2-coloring $c$ of the crossover gadget such that $c(x) = c(x')= p(x)= p(x')$, $c(y) = c(y')= p(y)= p(y')$, and all edges incident to $x,x',y,y'$ in the gadget are bichromatic.
		\item The crossover gadget is acyclic. 
	\end{itemize}
\end{lemma}

\begin{figure}[htb]
	\begin{subfigure}[t]{0.48\textwidth}
		\centering
	  	\includegraphics[page = 2,scale= 0.65]{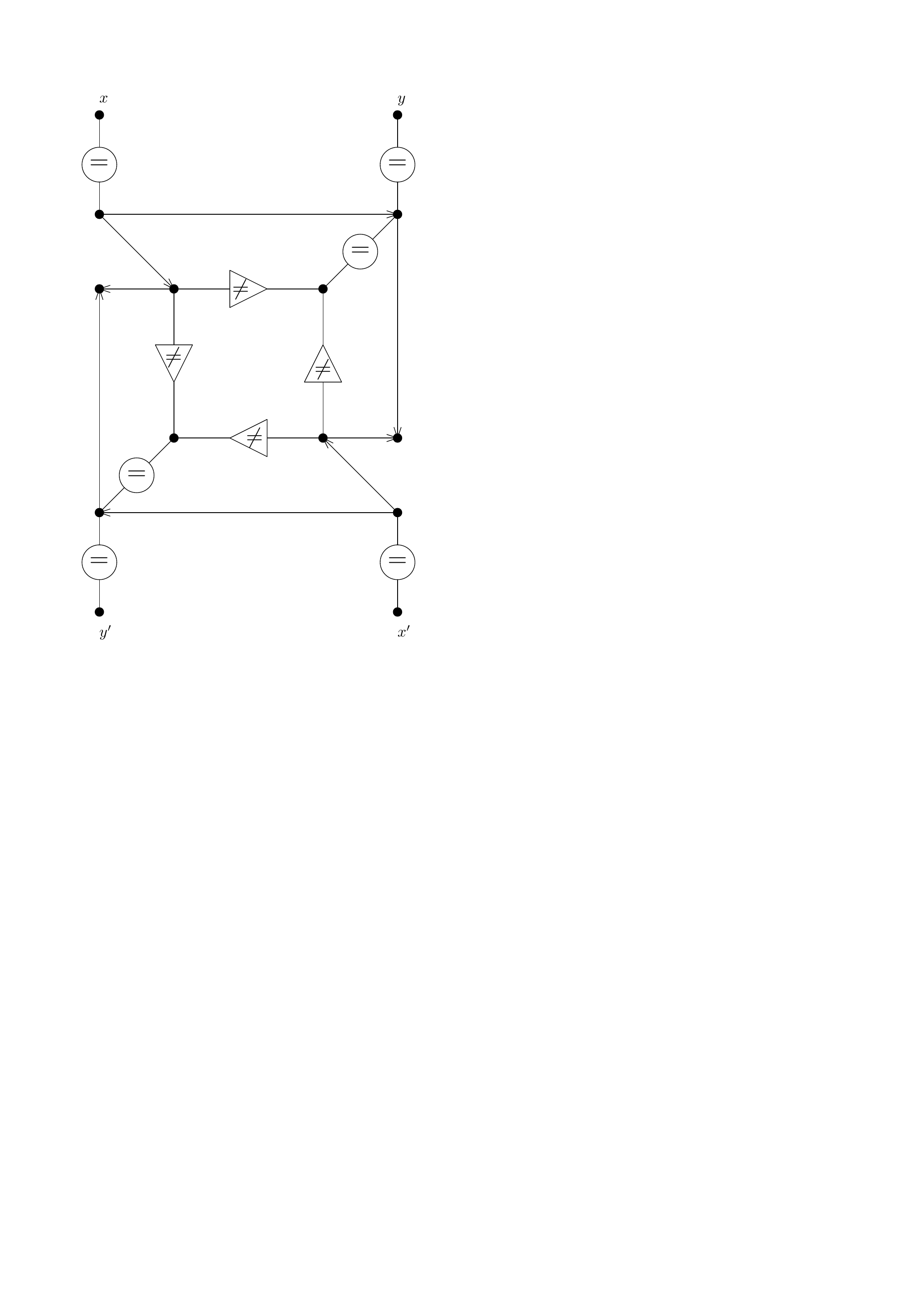}
	  	\caption{}
	  	\label{fig:ColoringCrossoverP3}
	\end{subfigure}
	\begin{subfigure}[t]{0.48\textwidth}
		\centering
	\includegraphics[page = 3,scale = 0.65]{figs/Crossover}
	\caption{}
	\label{fig:crossoverV3}
\end{subfigure}
\caption{Crossover gadget for (a) $\vec{P}_3$ with possible 2-colorings and (b) crossover gadget of $\vec{V}_3$.}
\end{figure}

\begin{proof}
	Using the negators and extenders of the crossover, we see immediately that $y$ and $y'$ must have the same color, say $0$. 
	If $x$ is colored with $0$, then $x'$ must have color $0$ as well, as otherwise necessarily a monochromatic copy of $\vec{P}_3$ is created.
	Similar $c(x) = 1$ implies $c(x') =1$. For an illustration of the cases see Figure \ref{fig:ColoringCrossoverP3}. 
	Since we can color the extender gadget such that arcs incident to $x,x',y,y'$ are bichromatic and any pre-coloring in which $x$ and $x'$ as well as $y$ and $y'$ have the same color can be extended to a $\vec{P}_3$-free coloring of the gadget, the second property is fulfilled. 
	Moreover, the negator gadgets are arranged in such a way that they cannot contribute to an induced directed cycle. Hence the crossover gadget is acyclic.
\end{proof}

The last gadget we need is the \emph{clause gadget}. 
Assume there is a pre-coloring $c:\{t',x',y',z'\} \rightarrow \{0,1\}$ of some vertices of the gadget.
\begin{lemma} \label{lem:clauseP3}
	The clause gadget as depicted in \ref{fig:clauseGadget} is acyclic and a pre-coloring $c$ can be extended to a $\vec{P}_3$-free $\{0,1\}$-coloring if and only if $c(t') \in \{c(x'),c(y'),c(z')\}$.
\end{lemma}

\begin{figure}[htb]
	\begin{subfigure}{0.48\textwidth}
		\centering
		\includegraphics[page = 1,scale= 0.7]{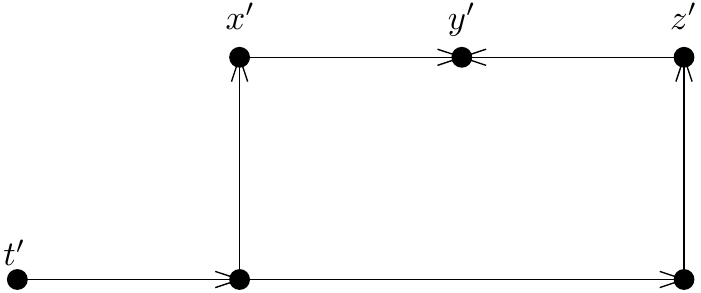}
		\caption{}
		\label{fig:clauseGadget} 
	\end{subfigure}
	\begin{subfigure}{0.48\textwidth}
		\centering
		\includegraphics[page=2,scale = 0.7]{figs/Clause}
		\caption{}
		\label{fig:clauseV3}
	\end{subfigure}
	\caption{Clause gadget for (a) $\vec{P}_3$-free and (b) $\vec{V}_3$-free colorings.}
\end{figure}

\begin{proof}
	Assume that $c(t') = 1$. 
	First we show that there is no $\vec{P}_3$-free 2-coloring of the clause gadget with $c(x') = c(y') = c(z') =0$. 
	Assume there is such a coloring. Because $x'$ and $y'$ (resp. $y'$ and $z'$) together with a third vertex are an induced $\vec{P}_3$, the third vertex has to be colored with $1$. 
	So $t'$ and the other two unlabeled vertices are all colored with 1, contradiction since they build an induced $\vec{P}_3$. 
	It is easy to check that all other combinations lead to a $\vec{P}_3$-free $2$-coloring of the gadget.
\end{proof}

We are now ready to describe the reduction of 3-SAT to $2$-$\vec{P}_3$-PFC.

For a given 3-SAT formula $F = c_1 \wedge c_2 \wedge \ldots \wedge c_k$ with clauses $C = \{ c_1, \ldots , c_k \}$, each consisting of three literals, 
we construct a planar graph $G_F$.
Let $x_1, \ldots, x_m $ denote the literals in $F$ and $\overline{x_1}, \ldots, \overline{x_n}$ their negations.
First we add some vertices for every literal $x_i$ and one for every negation $\overline{x_i}$.
Furthermore we add an additional vertex $t$.
We start connecting the vertices. Every pair $x_i$,  $\overline{x_i}$ is connected by a negator.
So we can be sure that in every $2$-coloring these two vertices have different color. 
For every clause $c_i = x_i \vee y_i \vee z_i$, we add a clause gadget such that $x'$ as depicted in Figure~\ref{fig:clauseGadget} is connected to the vertex corresponding to the literal $x_i$ by an extender gadget. Similarly $y'$ and $y_i$, $z' $ and $z_i$ and $t' $ and $t$.
Note that the graph constructed so far might be non-planar. Still we can draw the graph in such a way that crossings are only between extender gadgets. We can remove those crossings successively by adding a crossover gadget instead. 
Let $x$ and $x'$ two vertices connected by a extender gadget which is crossed by the extender gadget connecting $y$ and $y'$. 
Now we delete the two extender gadgets and add a crossover gadget instead between those four vertices. 
If we do this for every crossing between extender gadgets, we get a planar digraph.

Since all gadgets are acyclic the constructed graph is acyclic as well. Note that we can always choose the orientation of the negator gadget in such a way that they do not form an induced directed cycle. 
We can now conclude the proof that $2$-$\vec{P}_3$-PFC is \NP-hard.

Let us reduce $2$-$\vec{P}_3$-PFC from $3$-SAT. We claim that a given 3-SAT formula $F$ is satisfiable if and only if the graph $G_F$ has a proper $\vec{P}_3$-free $2$-coloring.

If $F$ is satisfiable, we fix an assignment of the literals such that the formula is true. 
We color the vertex $t$ with color $1$. 
Furthermore, we assign the corresponding vertices color $1$ (with color $0$) if the corresponding literals are assigned to be true (false, resp). 
Using the properties of the extender and negator gadgets this coloring is extended to a $2$-coloring of the digraph such that the outer arcs of the negator/extender gadgets are bichromatic. This $2$-coloring is $\vec{P}_3$-free since in the gadgets every $2$-coloring is $\vec{P}_3$-free and the outer arcs are bichromatic.  
	If $F$ is unsatisfiable, for every assignment of $\{0,1\}$ to the literals $x_1, \ldots , x_m$ there is a clause $c_i = x \vee  y \vee  z $ which is not satisfiable, hence $x,y,z$ are assigned with $0$ which corresponds to be colored with $0$. Hence the clause gadget has no $2$-coloring which implies that the constructed digraph has no  $\vec{P}_3$-free $2$-coloring.  

	Since the digraph is constructed in polynomial time in the number of clauses and literals of the 3-SAT formula, this concludes the proof of the \NP-hardness of $2$-$\vec{P}_3$-PFC.\\

Note that a reduction from PLANAR 3-SAT would not simplify the proof since the crossover gadget would still be necessary. To be more precise, the additional vertex $t$, which we introduced in order to fix the color which represents the value truth for the variables and which is connected by an extender gadget to all clause gadgets, might still cause crossings.

\paragraph{\textbf{\NP-hardness of the $2$-$\vec{V}_3$-PFC}}
Analogous to the previous case of the \NP-hardness of $2$-$\vec{V}_3$-PFC, we show the \NP-hardness of $2$-$\vec{V}_3$-PFC by reduction from 3-SAT. 
We define the negator, extender, crossover and clause gadgets similarly.

It is easy to check that the negator gadgets as depicted in \ref{fig:NegatorV3} fulfills the following conditions. \pagebreak[3]

\begin{lemma}\nopagebreak[4] \noindent
	\begin{itemize}
		\item In every $\vec{V}_3$-free 2-coloring of the negator gadget, the vertices $x$ and $y$ must receive different colors. 
		\item In any 2-coloring of a digraph, containing the negator gadget as a subdigraph in such a way that there are only incident edges to $x$ and $y$, there are no monochromatic copies of $\vec{V}_3$ using an edge of the negator gadget and one of the edges incident to $x$ or $y$ which does not belong to the negator gadget. 
		\item The negator gadget is acyclic. 
	\end{itemize}
\end{lemma}

The extender gadget is defined in the same manner as in the case of $\vec{P}_3$ by connecting two negator gadgets, see Figure \ref{fig:extender}. 
In the crossover gadget and the clause gadget, we change the orientation of some arcs as depicted in Figure \ref{fig:crossoverV3} and Figure \ref{fig:clauseV3}.
It is easy to show that this gadgets fulfill the same properties as in the case $\vec{P}_3$, see Corollary \ref{cor:extenderP3}, Lemma \ref{lem:crossoverP3} and Lemma \ref{lem:clauseP3}. 
The reduction to 3-SAT works exactly the same as in the last case. 

Hence we proved the \NP-hardness of $2$-$P$-PFC for every $P \in \mathcal{P}_3$.
\end{proof}

\section{$P$-free $2$-coloring for paths of length $3$}\label{sec:basecases25}
In this section, we prove Theorem~\ref{thm:2col_NPhard} in the case $n=4$. 
\begin{proposition}\label{4path2colors}
	Let $\vec{P} \in \mathcal{P}_4$. Then the $2$-$\vec{P}$-PFC is \NP-hard, even restricted to acyclic inputs.
\end{proposition}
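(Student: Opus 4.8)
The plan is to mirror the reduction from $3$-SAT used for $\mathcal{P}_3$ in Proposition~\ref{3path2colors}, adapting the four gadget types (negator, extender, crossover, and clause) to the length-$3$ setting. First I would cut down the number of cases using symmetry. Reversing all arcs of the input digraph turns a $\vec{P}$-free coloring into a $\revvec{P}$-free coloring, so the $2$-$\vec{P}$-PFC and the $2$-$\revvec{P}$-PFC are polynomially equivalent; moreover path-reversal is a digraph isomorphism of $\vec{P}$. Up to these two symmetries there are exactly three orientations of $P_4$ to treat: the directed path $a\to b\to c\to d$, the single-turn path $a\to b\to c\leftarrow d$, and the zig-zag $a\to b\leftarrow c\to d$ (the reverse of the single-turn path is handled for free by arc-reversal, and the zig-zag is its own reverse). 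It therefore suffices to exhibit gadgets for these three representatives. Note that, in contrast to the ``leaf-removal'' Proposition~\ref{remove2col}, this case cannot be reduced to a smaller path, since ${\rm lrem}(\vec{P})\in\mathcal{P}_2$ and the $2$-$\mathcal{P}_2$-PFC is polynomial; hence a genuine direct reduction is required, parallel to the $n=3$ base case.

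Next I would construct, for each representative $\vec{P}$, an acyclic planar negator gadget with two terminals $x,y$ such that (i)~every $\vec{P}$-free $2$-coloring assigns $x$ and $y$ distinct colors, and (ii)~some $\vec{P}$-free $2$-coloring realizes this with the terminal edges bichromatic and, crucially, introduces no monochromatic induced $\vec{P}$ that simultaneously uses a gadget edge and an external edge at $x$ or $y$. This boundary condition is exactly what makes the length-$3$ case more delicate than $\mathcal{P}_3$: since a forbidden copy now spans four vertices, a single external neighbour of a terminal can complete a monochromatic $\vec{P}$ together with three gadget vertices, so the terminals must be ``insulated'' by forcing their unique incident gadget edges to be bichromatic. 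To force both colors to appear inside a gadget I would plant an induced copy of $\vec{P}$ — or, where convenient, a copy of the outerplanar forcing digraph $\vec{O}(\vec{P})$ of Theorem~\ref{outermonchromaticpaths}, which already guarantees a monochromatic induced $\vec{P}$ in every $2$-coloring — and wire $x,y$ to it so that equality of their colors propagates into a monochromatic $\vec{P}$. As in the $\mathcal{P}_3$ case, chaining two negators yields an extender forcing $c(x)=c(y)$ in the spirit of Corollary~\ref{cor:extenderP3}, and combining negators and extenders gives a crossover gadget that equalizes two antipodal pairs and enables planarization.

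The clause gadget should, given a pre-coloring of terminals $t',x',y',z'$, be extendable to a $\vec{P}$-free $2$-coloring exactly when $c(t')\in\{c(x'),c(y'),c(z')\}$, verified by the same finite case check as in Lemma~\ref{lem:clauseP3}. With all four gadgets in hand, the assembly is verbatim the one in Proposition~\ref{3path2colors}: introduce a global vertex $t$ fixing the truth color, a negator between each literal and its negation, an extender from each literal to the corresponding clause terminal and from $t$ to every $t'$, and replace each crossing of extenders by a crossover; then the formula is satisfiable if and only if the resulting acyclic planar digraph admits a $\vec{P}$-free $2$-coloring. Acyclicity is preserved because each gadget is acyclic and the terminals are separated from the rest by directed edge-cuts, exactly as exploited in Theorem~\ref{outermonchromaticpaths} and Proposition~\ref{remove2col}.

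The main obstacle is the gadget design and verification rather than the reduction scheme, which is already established. Concretely, the hardest part will be building negator and clause gadgets that are simultaneously acyclic, planar with all their terminals on the outer boundary (so that they can be wired together without forcing crossings inside a gadget), and free of unintended monochromatic induced $\vec{P}$'s — both internally and across the gadget boundary — for each of the three orientation types; I expect the zig-zag $a\to b\leftarrow c\to d$ to require the most care, since its two turns let induced subpaths arise in many positions. Since every gadget is of bounded size, correctness reduces to inspecting the finitely many $2$-colorings of the (figure-defined) gadgets, and the whole digraph $G_F$ is clearly constructible in time polynomial in the number of clauses and literals, giving the claimed \NP-hardness, even for acyclic inputs.
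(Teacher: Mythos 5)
Your symmetry reduction to the three representatives $\vec{P}_4$, $\vec{N}_4$, $\vec{L}_4$ matches the paper, and your concern about insulating gadget terminals is legitimate; but the proposal has a genuine gap: it never constructs any of the gadgets it relies on. For the $n=3$ base case the negator, extender, crossover and clause gadgets are explicit finite digraphs whose properties are checked; here you only postulate that analogous acyclic, planar gadgets exist for each of the three orientations of $P_4$, and that existence is precisely the mathematical content of the proposition. Moreover, the one concrete alternative tool you offer --- planting a copy of $\vec{O}(\vec{P})$ from Theorem~\ref{outermonchromaticpaths} inside a gadget ``to force both colors'' --- cannot work in the $2$-coloring setting: by that theorem, \emph{every} $2$-coloring of $\vec{O}(\vec{P})$ contains a monochromatic induced $\vec{P}$, so any digraph containing it as an induced subdigraph admits no $\vec{P}$-free $2$-coloring whatsoever, and your reduction would output only NO-instances. (That digraph is useful in the paper only for the $3$-coloring reduction of Proposition~\ref{remove3col}, where a third color is available.)

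You also assert that, since ${\rm lrem}(\vec{P}) \in \mathcal{P}_2$ is polynomially solvable, ``a genuine direct reduction is required.'' This is exactly where the paper takes a different and much shorter route, which your plan misses: for $\vec{P}_4$ and $\vec{N}_4$ it reduces from the already-established \NP-hardness of the $2$-$\vec{P}_3$-PFC and the $2$-$\vec{V}_3$-PFC respectively, by attaching to every vertex $v$ of the input digraph a disjoint pendant copy of the four-vertex path with all arcs oriented from $v$ into the copy; removing a single leaf (rather than both) drops from $\mathcal{P}_4$ to $\mathcal{P}_3$, where hardness is already known, so no gadgets are needed at all for these two cases. Only $\vec{L}_4$ is handled by a direct $3$-SAT reduction, and even there the ``clause gadget'' is simply a disjoint induced copy of $\vec{L}_4$ wired by extenders to the three literal vertices and to a global vertex $t$ --- no separate clause gadget with your pre-coloring property $c(t') \in \{c(x'),c(y'),c(z')\}$ is constructed or needed. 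As written, your proposal is a strategy outline: without explicit, verified gadgets for each of the three orientations (or the pendant-copy reduction that obviates them), it does not constitute a proof.
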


\begin{proof}
There are four non-isomorphic orientations of $P_4$ as illustrated in Figure~\ref{fig:OrientationP4}. Firstly $\vec{P}_4$, the directed path of length $3$, secondly $\vec{N}_4$, the anti-directed path of length $3$, the path $\vec{L}_4$ consisting of vertices $v_1,v_2,v_3,v_4$ and arcs $(v_1,v_2)$, $(v_2,v_3)$, $(v_4,v_3)$, as well as the path $\revvec{L}_4$ obtained from $\vec{L}_4$ by reversing all arcs. Since a given planar acyclic digraph $D$ is $\revvec{L}_4$-free $2$-colorable if and only if the (acyclic) digraph $\revvec{D}$ obtained from $D$ by reversing all arcs is $\vec{L}_4$-free $2$-colorable, it suffices to show the \NP-hardness of the $2$-$\vec{P}$-PFC restricted to acyclic inputs for $\vec{P} \in \{\vec{P_4},\vec{N}_4,\vec{L}_4\}$.

\begin{figure}[htb]
	\centering
	\includegraphics{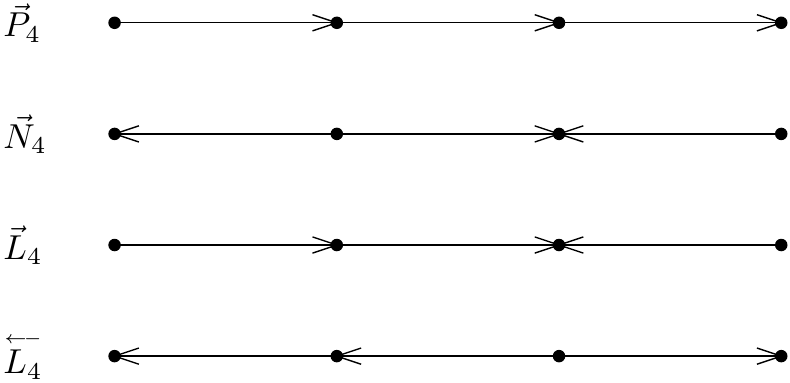}
	\caption{The four different Orientations of the $P_4$.}
	\label{fig:OrientationP4}
\end{figure}

\paragraph{\textbf{\NP-hardness of the $2$-$\vec{N}_4$-PFC}}
We show \NP-hardness by reducing from the $2$-$\vec{V}_3$-PFC restricted to acyclic inputs, which was shown \NP-complete in Section~\ref{sec:basecases2}. Recall that $\vec{V}_3$ is the orientation of the $P_3$ where the middle vertex is a sink. 
Suppose we are given a planar acyclic digraph $D$ as an input to the $2$-$\vec{N}_4$-PFC. Let $D'$ be the planar digraph obtained from $D$ by adding for each vertex $v \in V(D)$ a disjoint copy $\vec{N}_4^v$ of $\vec{N}_4$ and connecting it to $v$ with the arcs $(v,u),$ $u \in V(\vec{N}_4^v)$. We claim that $D'$ is acyclic and that $D$ admits a $\vec{V}_3$-free $2$-coloring if and only if $D'$ has a $\vec{N}_4$-free $2$-coloring.

Firstly, $D'$ is indeed acyclic, since each of the disjoint subgraphs $D$, $\vec{N}_4^v, v \in V(D)$ is acyclic, and no directed cycle can contain vertices from two distinct subgraphs, since no arc in $D'$ emanates from $V(D)$ into any of the copies $\vec{N}_4^v, v \in V(D)$.

To prove the first direction of the equivalence, assume that $D$ admits a $\vec{V}_3$-free $2$-coloring $c\mathop{:}V(D) \rightarrow \{0,1\}$. Then we can extend this to a vertex-$2$-coloring $c'\mathop{:}V(D') \rightarrow \{0,1\}$ of $D'$ by coloring the vertices within each copy $\vec{N}_4^v$ according to a proper $\{0,1\}$-coloring of the (undirected) path $P_4$. We claim that $c'$ is $\vec{N}_4$-free: Suppose towards a contradiction that there is a monochromatic induced copy $x_1, (x_2,x_1), x_2, (x_2, x_3), x_3, (x_4,x_3),x_4$ of $\vec{N}_4$ in $D'$. Since $x_2,x_3,x_4$ induce a monochromatic $\vec{V}_3$ in $D'$, and since $c$ is a $\vec{V}_3$-free $2$-coloring of $D$, there must be $i \in \{2,3,4\}$ such that $x_i \notin V(D)$. Let $v \in V(D)$ be such that $x_i \in V(\vec{P}_4^v)$. 
If $i \in \{2,4\}$, then $(x_{i},x_{i-1}) \in A(D')$, and since there is no arc in $D'$ leaving $V(\vec{N}_4^v)$, we must have $x_{i-1} \in V(\vec{N}_4^v)$ as well. This however contradicts that $c'$ properly colors the copy $\vec{N}_4^v$. 
Hence, $i=3$. Then there is a $j \in \{2,4\}$ such that $x_j \neq v$. As we have $(x_j,x_3) \in A(D')$ and since $v$ is the only in-neighbor of $x_3$ outside $V(\vec{N}_4^v)$, we must have $x_j \in V(\vec{N}_4^v)$ as well. This means that $(x_j,x_2)$ is a monochromatic edge in $\vec{N}_4^v$, again contradicting that $c'$ properly colors this copy. Hence, our assumption was wrong, $c'$ is indeed a $\vec{N}_4$-free $2$-coloring of $D'$.

For the reversed direction, assume that $c':V(D') \rightarrow \{0,1\}$ is a $\vec{N}_4$-free coloring of $D'$, and let $c$ be its restriction to $V(D)$. We claim that $c$ is $\vec{V}_3$-free. Suppose not, then let $x_1,x_2,x_3$ be the vertex-trace of a monochromatic copy of $\vec{V}_3$ in $D$ with sink $x_2$. Since $\vec{N}_4^{x_3}$ is an induced copy of $\vec{N}_4$ in $D'$, it must be bichromatic in the coloring $c'$. Hence, there is $x_4 \in V(\vec{N}_4^{x_3})$ such that $c'(x_4)=c'(x_3)$. This yields that $x_1,x_2,x_3,x_4$ induce a monochromatic copy of $\vec{N}_4$ in the coloring $c'$ of $D'$, a contradiction. This shows that $D$ is indeed $\vec{V}_3$-free $2$-colorable and proves the claimed equivalence.

The correctness of the reduction and the fact that $D'$ can be constructed from $D$ in polynomial time shows that the $2$-$\vec{N}_4$-PFC is \NP-complete for acyclic inputs, as claimed.

\paragraph{\textbf{\NP-hardness of the $2$-$\vec{P}_4$-PFC}}
With the same arguments as in the \NP-hardness proof of the $2$-$\vec{N}_4$-PFC in the last paragraph, it is easy to see that
$2$-$\vec{P}_4$-PFC is \NP-hard. The proof works by reducing from the $2$-$\vec{P}_3$-PFC restricted to acyclic inputs, which was shown \NP-complete in Section~\ref{sec:basecases2}.

\paragraph{\textbf{\NP-hardness of the $2$-$\vec{L}_4$-PFC}}
	We show that deciding whether a given acyclic planar digraph has an $\vec{L}_4$-free $2$-coloring is \NP-hard by reducing from $3$-SAT. For this we need to introduce some gadgets. The first gadget, the \emph{extender gadget} (see Figure~\ref{fig:P4-extender}) enforces the same color on its two end vertices $x$ and $y$.
	\begin{figure}[htb]
	\centering
	\includegraphics[page = 1,scale= 0.7]{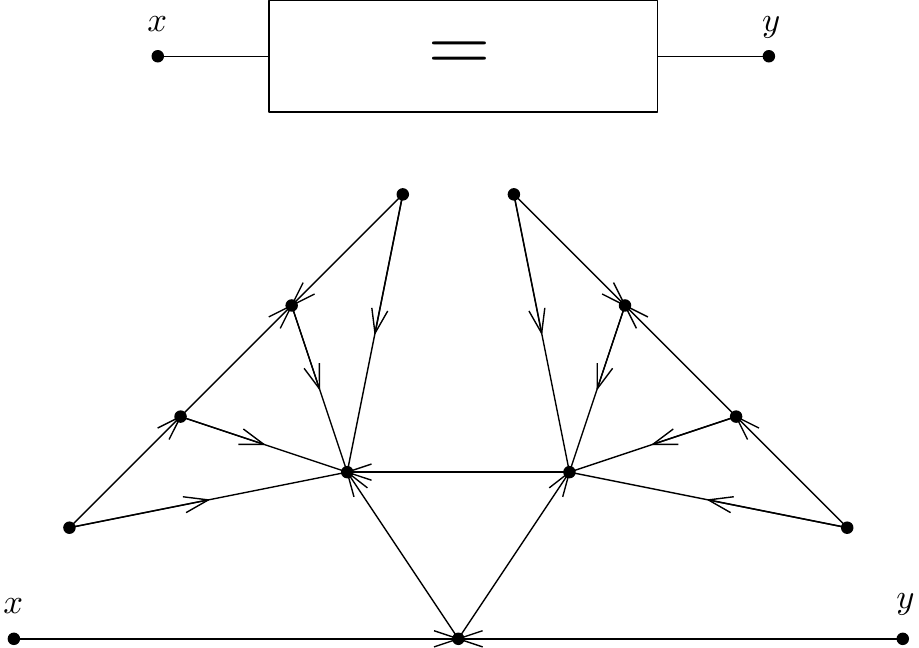}
	\caption{The extender gadget for $\vec{L}_4$-free colorings.}
	\label{fig:P4-extender}
\end{figure}

	The following properties of this gadget are readily verified.

	\begin{lemma}  The extender gadget depicted in Figure~\ref{fig:P4-extender} has the following properties:
		\begin{itemize}
			\item In every $\vec{L}_4$-free $2$-coloring of the extender gadget the vertices $x$ and $y$ must receive the same color.
			\item There is an $\vec{L}_4$-free $2$-coloring of the extender gadget in which the unique incident edges of $x$ and $y$ are bichromatic.
			\item The extender gadget is acyclic.
		\end{itemize}
	\end{lemma}
	\begin{proof}
		Let $c$ be an $\vec{L}_4$-free $2$-coloring of the gadget. Then the color $0$ and $1$ appear in the induced $\vec{L}_4$ in both oriented fans. Since every pair of vertices together with the root vertices $a$ and $b$ of the fan build an induced fan, it holds $c(a) \neq c(b)$.
		The vertex $z$ between $x$ and $y$ is colored with one of the two colors. Without loss of generality assume $c(z) = c(a)$. 
		Hence $x$ and $y$ must receive the color different from $c(z)$ since otherwise they would form a monochromatic induced $\vec{L}_4$. 
		This shows $c(x) = c(y)$ and that the two unique incident edges are bichromatic. 
		Clearly the gadget is acyclic. 
	\end{proof}
	The next gadget is the \emph{negator-gadget} (see Figure~\ref{fig:P4-negator}), which enforces distinct colors on its two end vertices.
	\begin{figure}[htb]
		\centering
		\includegraphics[page = 1,scale= 0.7]{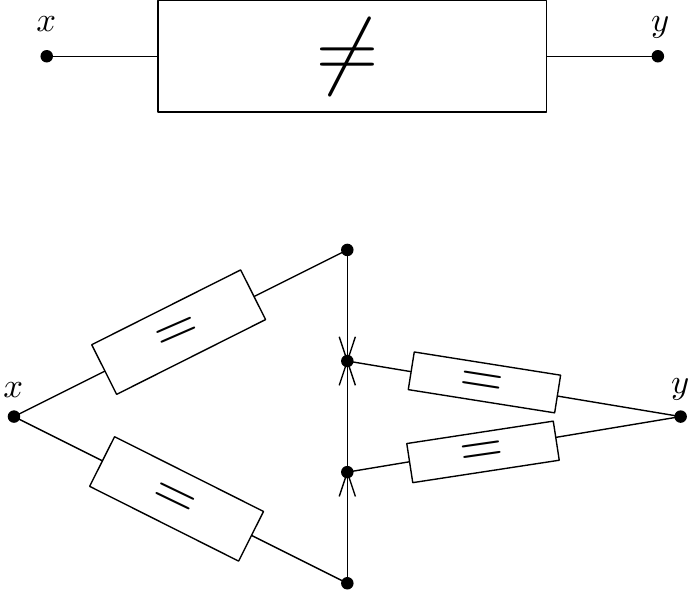}
		\caption{The negator gadget for $\vec{L}_4$-free colorings.}
		\label{fig:P4-negator}
	\end{figure}

	\begin{lemma}
		The negator gadget as depicted in Figure~\ref{fig:P4-negator} has the following properties:
	\begin{itemize}
		\item In every $\vec{L}_4$-free $2$-coloring of the negator gadget, the vertices $x$ and $y$ receive different colors.
		\item There is an $\vec{L}_4$-free $2$-coloring of the negator gadget in which the incident edges of $x$ and $y$ in this gadget are bichromatic.
		\item The negator-gadget is acyclic.
	\end{itemize}
	\end{lemma}
	\begin{proof}
		Assume there is an $\vec{L}_4$-free $2$-coloring $c$ of the negator gadget such that $c(x) = c(y)$. Using the properties of the extender gadget all vertices receive the same color $c(x)$. This gives a monochromatic $\vec{L}_4$ which is a contradiction. 
		The second property follows from the extender gadget where we have bichromatic incident edges. Furthermore the acyclicity of the extender gadget shows that the negator gadget is acyclic. 
	\end{proof}
The last gadget we need is the \emph{crossover gadget} (see Figure~\ref{fig:P4-crossover}), which enforces two pairs of antipodal vertices on its outer face to have the same color.
\begin{figure}[htb]
	\centering
	\includegraphics[page = 1,scale= 0.7]{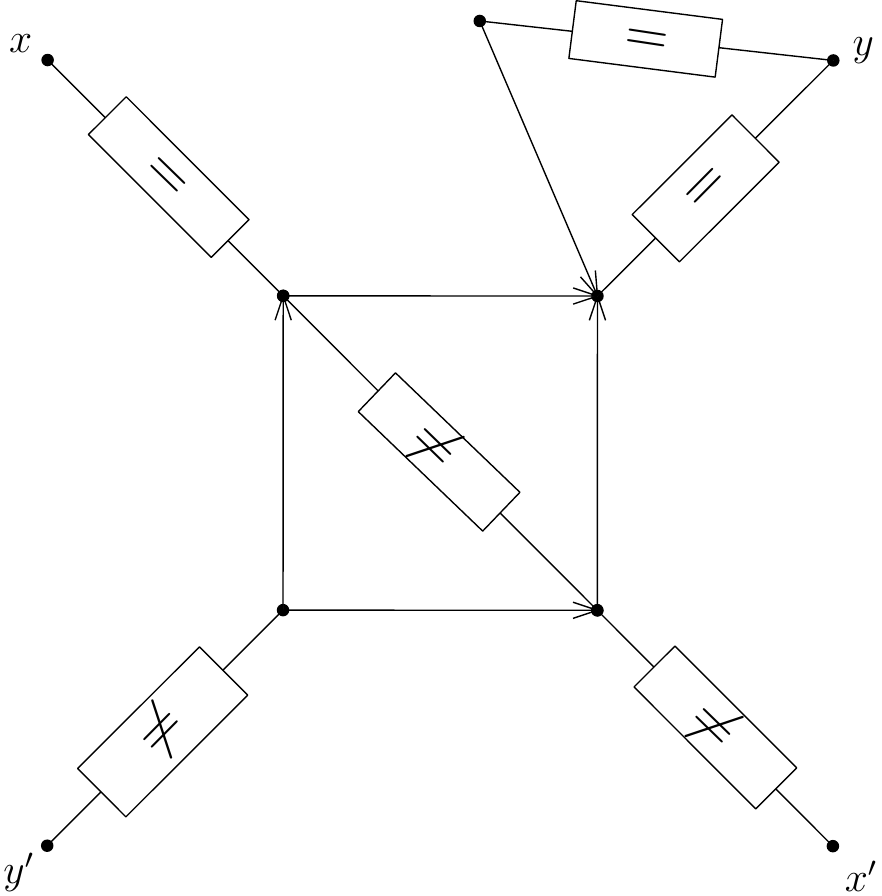}
	\caption{The crossover gadget for $\vec{L}_4$-free colorings.}
	\label{fig:P4-crossover}
\end{figure}

\begin{lemma}\label{obs:crossover}
	The crossover gadget as shown in Figure~\ref{fig:P4-crossover} has the following properties:
\begin{itemize}
\item In every $\vec{L}_4$-free $2$-coloring of the crossover gadget the vertices $x$ and $x'$ as well as the vertices $y$ and $y'$ have the same color.
\item For every $p:\{x,y,x',y'\} \rightarrow \{0,1\}$ such that $p(x)=p(x'), p(y)=p(y')$, there exists a $\vec{L}_4$-free $2$-coloring $c$ of the crossover gadget such that $c(x)=c(x')=p(x)=p(x'), c(y)=c(y')=p(y)=p(y')$, and such that all incident edges of $x, y, x', y'$ in the gadget are bichromatic.
\item The crossover gadget is acyclic.
\end{itemize}
\end{lemma}
\begin{proof}
	The extender gadgets immediately show that $x$ and $x'$ have the same color in every $\vec{L}_4$ $2$-coloring. Furthermore in order to avoid a monochromatic $\vec{L}_4$ the vertices $y$ and $y'$ receive the same color. 
	The second and third property follows from the properties of the negator and extender gadget. 
\end{proof}
We are now ready to describe the reduction of $3$-SAT to the $2$-$\vec{L}_4$-PFC with acyclic inputs. Suppose we are given a $3$-SAT formula on the literals $x_i, \overline{x_i},$ $i=1,\ldots,n$ and consisting of $m$ clauses $c_1,\ldots,c_m$ as an input to $3$-SAT. We construct an auxiliary acyclic digraph $D'$, which might not yet be planar as follows: We have $2n$ vertices corresponding to $x_i, \overline{x_i}, i=1,\ldots,n$, and connect $x_i$ and $\overline{x_i}$ by a negator. For every $j=1,\ldots,m$, we add a disjoint copy of $\vec{L}_4$ on the vertices $x_{1,j}, x_{2,j},x_{3,j},x_{4,j}$ and arcs $(x_{1,j},x_{2,j}),(x_{2,j},x_{3,j}), (x_{4,j},x_{3,j})$. Letting $c_j=l_1^j \vee l_2^j \vee l_3^j$, for $i=1,2,3$ we connect $x_{i,j}$ with an extender to the vertex representing the literal $l_i^j$. We add a further special vertex $t$ which is connected via extenders to each of the vertices $x_{4,j}$, $j=1,\ldots,m$. Clearly, the so-defined digraph $D'$ can be constructed in polynomial time in $m$ and $n$ and is of polynomial size in $m$ and $n$. It is furthermore clear from the properties of the extender and negator gadgets that $D'$ is an acyclic digraph.

We claim that $D'$ admits an $\vec{L}_4$-free $2$-coloring if and only if $c_1 \wedge \ldots \wedge c_m$ is satisfiable. For the first direction, suppose we are given an $\vec{L}_4$-free $\{0,1\}$-coloring $c$ of $D'$. W.l.o.g. let $c(t)=0$. Then by the properties of the negators, we have $c(x_i)=1-c(\overline{x_i}) \in \{0,1\}$ for $i=1,\ldots,m$. We claim that assigning the truth value $c(x_i)$ to each variable $x_i$ defines a truthful assignment for $c_1 \wedge \ldots \wedge c_m$. Suppose not, then there exists $j \in \{1,\ldots,m\}$ such that the colors of the vertices representing the three literals $l_1^j, l_2^j, l_3^j$ in $c_j$ are $0$ each. Since these vertices are connected by extenders to the vertices $x_{1,j},x_{2,j},x_{3,j}$ of $D'$, we have $c(x_ {i,j})=0$, $i=1,2,3$. We further have $c(x_{4,j})=c(t)=0$, so that $x_{1,j},x_{2,j},x_{3,j},x_{4,j}$ induce a monochromatic copy of $\vec{L}_4$ in $D'$, a contradiction. Hence, we have indeed found a truthful assignment.

For the reversed direction, suppose we are given an assignment of $0,1$-truth-values to the variables $x_1,\ldots,x_n$. We define a $\{0,1\}$-coloring of $D'$ by assigning to each vertex representing a literal its truth value, and coloring $t$ with color $0$. Further, every vertex $x_{i,j}$, $i=1,2,3, j=1,\ldots,m$ is colored with the truth value of the literal it is connected to by an extender, and the vertices $x_{4,j}$, $j=1,\ldots,m$ receive color $0$. The partial coloring defined so far has the property that the two end vertices of any extender in $D'$ have the same color, while end vertices of negator gadgets have distinct colors. Using the second property of extender and negator gadgets, we can extend this partial coloring to a full $\{0,1\}$-coloring $c$ of $D'$ by coloring the internal vertices of every extender or negator gadget by an $\vec{L}_4$-free coloring such that the incident edges of the two ends of such a gadget are bichromatic. We claim that the so-defined coloring $c$ is a $\vec{L}_4$-free coloring of $D$. Suppose not, then there must be an induced copy $P$ of $\vec{L}_4$ in $D'$ which is monochromatic under $c'$. We first observe that $P$ cannot contain any internal vertices of extender or negator gadgets. Indeed, since the coloring of the internal vertices of any gadget by definition is $\vec{L}_4$-free, if $P$ intersects the interior of a gadget, it would have to contain one of the two end vertices of the gadget and hence an incident edge of these vertices as well. This contradicts the fact that all such edges are by definition bichromatic. Hence, $P$ is contained in the digraph obtained from $D$ by deleting all internal vertices of extender and negator gadgets. The only non-singleton components of this digraph are constituted by the $m$ disjoint copies of $\vec{L}_4$ induced by $x_{1,j},x_{2,j},x_{3,j},x_{4,j}$, $j=1,\ldots,m$. This means that $P$ must equal one of these paths, i.e., $c(x_{1,j})=c(x_{2,j})=c(x_{3,j})=c(x_{4,j})=0$ for some $j$. By definition, this means that all three literals in the clause $c_j$ have truth value $0$, a contradiction. Hence, we proved the claimed equivalence. 

The digraph $D'$ might not be planar. In order to solve this issue, we consider a drawing of $D'$ in the plane in which the vertices $t,x_1,\overline{x_1},\ldots,x_n,\overline{x_n}$ are placed on a horizontal line $\ell_1$ such that the negator-gadgets between the vertices $x_i, \overline{x_i}$ are drawing as thin horizontal boxes on top of the line $\ell_1$ and the vertices $x_{i,j}$, $i=1,2,3, j=1,\ldots,m$ are placed on a parallel horizontal line $\ell_2$ such that the $m$ disjoint paths $x_{1,j},x_{2,j},x_{3,j},x_{4,j}$, $j=1,\ldots,m$ are drawn crossing-free on $\ell_2$. All the extender gadgets now connect vertices on the line $\ell_1$ to the line $\ell_2$, so we can draw them within thin rectangular strips touching $\ell_1$ and $\ell_2$. The only possible crossings in this drawing are between pairs of such extender-strips spanned between $\ell_1$ and $\ell_2$. Note that the number of pairs of such strips which cross is $O(|V(D')|^4)=\poly(m,n)$.

We now sequentially transform this drawing into a drawing of a planar digraph $D$: As long as there is a crossing between two extenders-strips connecting vertices $a_1, b_1$ and $a_2,b_2$ in the drawing we locally replace the crossing by a \emph{crossover} gadget and connect, with four disjoint extender gadgets, $a_1$ to the outer vertex $x$ of the crossover gadget, $b_1$ to $x'$, $a_2$ to $y$, $b_2$ to $y'$. This can be done such that the crossover gadget and the four new extender gadgets do not intersect pairwise and do not intersect with any other features of the drawing. Performing this operation sequentially for all crossings between extender-strips in the drawing, we construct a planar digraph $D$ of polynomial size in $m$ and $n$. Since $D'$ and each of the extender and crossover gadgets are acyclic, $D$ is acyclic. Further, whenever we replace a pair of intersecting extender gadgets by four non-intersecting extender gadgets and a central crossover gadget, this has the same effect on transporting colors as the two original extender gadgets it replaces, hence every $\vec{L}_4$-free coloring of $D$ gives rise to a truthful assignment of the formula $c_1\wedge c_2\wedge \ldots \wedge c_m$ by the same arguments as above for $D'$. Vice versa, if $c_1\wedge c_2\wedge \ldots \wedge c_m$ has a truthful assignment, then there is a $\vec{L}_4$-free coloring $c'$ of $D'$. By the last item of Observation~\ref{obs:crossover}, for every crossover gadget and for every extender gadget we used to replace a pair of crossing extender gadgets of $D'$, we can color these gadgets such that there are no monochromatic copies of $\vec{L}_4$ in the interior of any of the gadgets and such that all edges connecting an internal vertex of a gadget to the outside is bichromatic. Adding these colorings to the coloring of the vertices in $D'$ described by $c'$, we obtain a $2$-coloring $c$ of $D$. No monochromatic copy of $\vec{L}_4$ in $D$ with respect to the coloring $c$ can contain an internal vertex of one of these gadgets. Hence every such copy would have existed already in the coloring $c'$ of the digraph $D'$. This is a contradiction, which proves that $D$ is $\vec{L}_4$-free $2$-colorable. Summarizing, we have shown that $D$ is $\vec{L}_4$-free $2$-colorable iff $c_1\wedge c_2\wedge \ldots \wedge c_m$ admits a truthful assignment. Since we can construct the polynomially sized planar and acyclic digraph $D$ from the formula in polynomial time in $m$ and $n$, this concludes the desired reduction showing that $3$-SAT polynomially reduces to the $2$-$\vec{L}_4$-PFC with acyclic inputs.
This shows that the $2$-$\vec{L}_4$-PFC is \NP-hard, concluding the proof of Proposition~\ref{4path2colors}.
\end{proof}

We are now ready for the proof of Theorem~\ref{thm:2col_NPhard}.
\begin{proof}[Proof of Theorem~\ref{thm:2col_NPhard}]
	If $n=1$ and $\vec{P}$ is the one-vertex-path, then the $2$-$\vec{P}$-PFC admits a trivial constant-time algorithm. If $n=2$ and $\vec{P}$ is the directed edge, then the $2$-$\vec{P}$-PFC amounts to testing whether the underlying graph of a given digraph $D$ is bipartite, which can be checked in polynomial time.

	For every $n \ge 3$ and every $\vec{P} \in \mathcal{P}_n$, the $2$-$\vec{P}$-PFC clearly is contained in \NP, since we can verify the correctness of a $\vec{P}$-free coloring of a given digraph $D$ in time $O(|V(D)|^{|V(\vec{P})|})$ by brute-force.

	We now prove the \NP-hardness of the $2$-$\vec{P}$-PFC, restricted to acyclic inputs, for all $\vec{P} \in \mathcal{P}_n$ and $n \ge 3$ by induction on $n$. The base cases $n \in \{3,4\}$ are covered by Propositions ~\ref{3path2colors},~\ref{4path2colors}. So assume for the inductive step that $n \ge 5$, $\vec{P} \in \mathcal{P}_n$ and that we have shown that $2$-$\vec{P}$-PFC with acyclic inputs is \NP-hard for all oriented paths $\vec{P}$ of length at least two and at most $n-1$. Then also the $2$-${\rm lrem}(\vec{P})$-PFC restricted to acyclic inputs is \NP-hard, since ${\rm lrem}(\vec{P})$ is a path on $n-2 \ge 3$ vertices. Proposition~\ref{remove2col} now implies the \NP-hardness of the $2$-$\vec{P}$-PFC, restricted to acyclic inputs. This concludes the proof by induction.
\end{proof}

\section{$P$-free $3$-coloring for paths of length $1$ and $2$}\label{sec:basecases3} 

In this section, we prove Theorem~\ref{thm:3col_NPhard} for $n \in \{2,3\}$. Together with Proposition~\ref{remove3col} we will then be able to prove Theorem~\ref{thm:3col_NPhard} in its full generality.
We need the following easy consequence of Theorem~\ref{outermonchromaticpaths}. By $\vec{V}_3$ we denote the oriented path of length two whose middle vertex is a source.

\begin{figure}[htb]
	\centering
	\includegraphics[scale= 0.6, page =3]{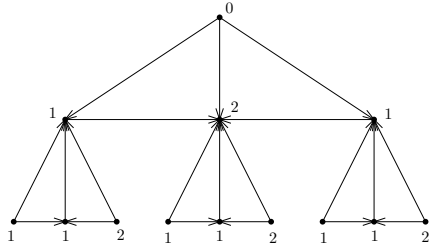}
	\caption{The acyclic outer planar digraph $\vec{O} = \vec{O}(\vec{V}_3) $ with a $3$-coloring such that there is unique vertex of color $0$.}
	\label{fig:OuterplanarV3crit}
\end{figure}
\begin{observation}\label{criticaloutermonochromaticpaths}
	The acyclic outerplanar digraph $\vec{O} = \vec{O}(\vec{V}_3) $ as introduced in Section~\ref{sec:outer} and illustrated in Figure~\ref{fig:OuterplanarV3crit} has the following properties 
	\begin{itemize}
		\item has no $\vec{V}_3$-free $2$-coloring, but
		\item there is a $\vec{V}_3$-free coloring with colors $\{0,1,2\}$ with a unique vertex $x_0 \in V(\vec{O})$ of color $0$.
	\end{itemize}
\end{observation}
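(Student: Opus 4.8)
The first property is immediate from Theorem~\ref{outermonchromaticpaths}: applying that theorem to the oriented path $\vec{P}=\vec{V}_3$ (with middle vertex a source, as defined just above) shows that \emph{every} $2$-coloring of $\vec{O}=\vec{O}(\vec{V}_3)$ contains a monochromatic induced copy of $\vec{V}_3$, so no $\vec{V}_3$-free $2$-coloring exists. All the work goes into the second property.

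The plan is to reduce the second property to a statement about $2$-colorings after deleting a single vertex. A $\vec{V}_3$-free $3$-coloring that uses color $0$ on exactly one vertex $x_0$ is the same thing as a $\vec{V}_3$-free $2$-coloring of $\vec{O}-x_0$ with the two colors $\{1,2\}$: the color class of $0$ is the singleton $\{x_0\}$ and is trivially $\vec{V}_3$-free, while the classes of colors $1$ and $2$ are contained in $V(\vec{O})\setminus\{x_0\}$, so the induced subdigraphs they span are the same whether computed in $\vec{O}$ or in $\vec{O}-x_0$. Hence it suffices to exhibit a single vertex $x_0$ such that $\vec{O}-x_0$ admits a $\vec{V}_3$-free $2$-coloring.

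I would take $x_0=u$, the root of $\vec{O}$. The reason $u$ is a natural choice is structural: specialized to $\vec{V}_3$, the forcing argument in the proof of Theorem~\ref{outermonchromaticpaths} always produces a monochromatic induced path $u,v_i,w$ running from the apex $u$ through a first-level vertex $v_i$ to a vertex $w$ at distance $2$, so every such forcing chain passes through $u$; this suggests, and one then checks directly, that deleting $u$ is enough to make the remaining digraph $2$-colorable. Concretely I would take the $2$-coloring of $\vec{O}-u$ that colors the first-level path $v_1,v_2,v_3$ with two distinct colors and then colors each attached fan so that no vertex shares its color with two of its out-neighbours (such a coloring is recorded in Figure~\ref{fig:OuterplanarV3crit}, where $u$ receives the unique color $0$). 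To verify $\vec{V}_3$-freeness I would use the following sufficient local criterion: since $\vec{V}_3$ has its middle vertex as a source, a monochromatic induced $\vec{V}_3$ can only occur at a vertex $b$ having two non-adjacent out-neighbours of its own color; thus it is enough to check, for each of the finitely many vertices of out-degree at least $2$ (namely the three first-level vertices and the three middle vertices of the second-level fan-paths), that it has at most one out-neighbour of its own color. This is a finite, purely local inspection.

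The main obstacle is precisely this verification step: because $\vec{O}-u$ still contains the path $v_1v_2v_3$ together with its three attached fans, a careless $2$-coloring can create a monochromatic $\vec{V}_3$ either inside a single fan or spanning the path $v_1v_2v_3$, and all of these must be excluded simultaneously. Phrasing the check through the ``at most one same-colored out-neighbour per vertex'' condition makes it routine and confines it to the handful of out-degree-$\ge 2$ vertices; one then only has to confirm that the exhibited coloring meets this condition and uses color $0$ exactly once, on $u$, which yields the desired $\vec{V}_3$-free $3$-coloring with unique color-$0$ vertex $x_0=u$.
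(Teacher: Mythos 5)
Your proposal is correct and takes essentially the same approach as the paper: the paper also obtains the first property as an immediate consequence of Theorem~\ref{outermonchromaticpaths}, and justifies the second by exhibiting precisely the coloring you describe --- the root $u$ as the unique vertex of color $0$, with the rest of $\vec{O}-u$ $2$-colored --- in Figure~\ref{fig:OuterplanarV3crit}. Your reformulation as a $\vec{V}_3$-free $2$-coloring of $\vec{O}-u$ and the ``at most one same-colored out-neighbour'' criterion simply make the verification of that figure explicit and rigorous.
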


\begin{proposition}\label{23path3colors}
	Let $\vec{P} \in \mathcal{P}_2 \cup \mathcal{P}_3$. Then the $3$-$\vec{P}$-PFC is \NP-hard, even restricted to acyclic inputs.
\end{proposition}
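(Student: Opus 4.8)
The plan is to treat the three relevant orientation types separately, reducing everything to two genuinely new base cases. First consider $\mathcal{P}_2$, i.e.\ $\vec{P}$ a single arc. Here an induced copy of $\vec{P}$ is just a monochromatic arc, so a $\vec{P}$-free $3$-coloring is exactly a proper $3$-coloring of the underlying graph $U(D)$. Since $3$-colorability of planar graphs is \NP-complete, I would take a planar graph $G$ that is hard to $3$-color, fix an arbitrary acyclic orientation $D$ of $G$ (for instance by ordering the vertices and orienting each edge towards its larger endpoint), and observe that $D$ is acyclic, planar, and admits a $\vec{P}$-free $3$-coloring if and only if $G$ is properly $3$-colorable. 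This settles the length-$1$ case on acyclic inputs.

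For $\mathcal{P}_3$ there are three orientations: the directed path $\vec{P}_3$, and the two orientations whose middle vertex is a sink or a source. As in Sections~\ref{sec:basecases2} and~\ref{sec:basecases25}, the sink- and source-orientations are reverses of one another, and since reversing all arcs preserves acyclicity and maps $\vec{P}$-free $3$-colorings to $\revvec{P}$-free $3$-colorings, it suffices to prove \NP-hardness for $\vec{V}_3$ (middle vertex a source) and for $\vec{P}_3$. Membership in \NP is clear, since a candidate coloring can be verified by checking all $O(|V(D)|^3)$ vertex triples.

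For $\vec{V}_3$ I would reduce from $3$-SAT, following the gadget methodology of Section~\ref{sec:basecases2}, with the critical outerplanar digraph $\vec{O}=\vec{O}(\vec{V}_3)$ of Observation~\ref{criticaloutermonochromaticpaths} as the central new ingredient. The point of $\vec{O}$ is that it emulates, inside the three-color world, the two-color behaviour exploited in the earlier reductions: because $\vec{O}$ has no $\vec{V}_3$-free $2$-coloring, every $\vec{V}_3$-free $3$-coloring of a digraph that contains $\vec{O}$ (attached only through its boundary) must spend all three colors on $\vec{O}$, which lets me attach copies of $\vec{O}$ to distinguished \emph{variable} vertices so as to reserve one color and confine these vertices to a two-element palette $\{0,1\}$ representing the truth values. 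The complementary property of Observation~\ref{criticaloutermonochromaticpaths} --- that $\vec{O}$ admits a $\vec{V}_3$-free coloring with a unique vertex of color $0$ --- is exactly what I would use for the completeness direction, to certify that a satisfying assignment can always be extended to a full $\vec{V}_3$-free $3$-coloring with the reserved color appearing only where intended. On top of this I would build negator (inequality), extender (equality) and clause gadgets as in Section~\ref{sec:basecases2}, now verifying their properties for $\vec{V}_3$-free $3$-colorings, and remove the crossings created by the non-planar auxiliary digraph by means of a crossover gadget, so that the final instance is planar; acyclicity is maintained because each gadget is acyclic and is attached through directed cuts. The path $\vec{P}_3$ is handled by the same scheme, using $\vec{O}(\vec{P}_3)$, which by Theorem~\ref{outermonchromaticpaths} again admits no $\vec{P}_3$-free $2$-coloring, together with the analogous extendable coloring, and reorienting the gadget arcs exactly as in the passage between $\vec{P}_3$ and $\vec{V}_3$ in Section~\ref{sec:basecases2}.

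The part I expect to be the real obstacle is the design and verification of the gadgets in the three-color regime. Three available colors make $\vec{V}_3$-freeness a much weaker constraint than properness, so forcing equality or inequality of colors --- and in particular reserving one color \emph{consistently} across all variable vertices, rather than letting each attached copy of $\vec{O}$ independently forbid an arbitrary color --- is delicate and is precisely where the two stated properties of $\vec{O}$ must be combined carefully. Equally subtle is the \emph{induced} nature of the forbidden copies: since a monochromatic $\vec{V}_3$ requires its two endpoints to be non-adjacent, every gadget arc I add must be checked neither to create a spurious induced $\vec{V}_3$ nor to destroy one needed for a forcing argument. This, together with the bookkeeping required to keep the instance simultaneously planar and acyclic as the crossover gadgets are inserted, is where the bulk of the technical work will lie.
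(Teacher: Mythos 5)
Your treatment of $\mathcal{P}_2$ is correct and coincides with the paper's: a monochromatic arc is exactly an induced monochromatic $\vec{P}$, so the problem is proper $3$-coloring of the underlying planar graph, and an arbitrary acyclic orientation gives the acyclic instance. The reversal symmetry reducing $\mathcal{P}_3$ to the two cases $\vec{P}_3$ and $\vec{V}_3$ is also fine.

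For the $\mathcal{P}_3$ cases, however, your proposal has a genuine gap: it is a plan rather than a proof. Everything that would actually carry the argument --- a negator, an extender, a clause gadget and a crossover gadget that work for $\vec{V}_3$-free (and $\vec{P}_3$-free) \emph{3}-colorings, plus a mechanism that makes the ``reserved'' color the \emph{same} across all variable vertices rather than an arbitrary color chosen independently by each attached copy of $\vec{O}$ --- is left unconstructed, and you yourself flag this as ``the real obstacle.'' None of the Section~\ref{sec:basecases2} gadgets transfer automatically: their forcing arguments rely on only two colors being available, and with a third color every counting argument in their verification breaks. So as written, the hardness of the $n=3$ cases is not established.

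The paper sidesteps all of this by choosing a different source problem: it reduces from $3$-colorability of \emph{planar} graphs, not from $3$-SAT. Given a planar graph $G$, take an acyclic orientation $D$ and attach to every vertex $v$ a copy $\vec{O}_v$ of $\vec{O}(\vec{V}_3)$, with all arcs oriented between $\vec{O}_v$ and $v$ in one direction. Since $\vec{O}_v$ has no $\vec{V}_3$-free $2$-coloring, all three colors appear in it; hence a monochromatic arc $(v_1,v_2)$ of $D$ together with a vertex of the same color in $\vec{O}_{v_2}$ yields a monochromatic induced $\vec{V}_3$, so any $\vec{V}_3$-free $3$-coloring of $D'$ restricts to a \emph{proper} coloring of $G$. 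Conversely, a proper $3$-coloring of $G$ extends to $D'$ using the coloring of $\vec{O}$ with a unique vertex of color $c(v)$ guaranteed by Observation~\ref{criticaloutermonochromaticpaths}. Because the source problem is already planar, no crossover gadget is needed, and no equality/inequality/clause gadgets exist at all; both properties of $\vec{O}$ are used, but only in this one-vertex attachment. If you want to salvage your 3-SAT route you would have to actually exhibit and verify the missing gadgets; reducing from planar $3$-coloring instead dissolves the difficulty you identified.
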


\begin{proof}
	If $P \in \mathcal{P}_2$, then $P$ is simply a directed edge between two vertices. Hence, a $P$-free $3$-coloring of a planar digraph $D$ is equivalent to a proper $3$-coloring of its underlying planar graph. The claim now directly follows from the well-known fact that deciding whether a given planar graph is properly $3$-colorable is \NP-hard~\cite{garey}, and since we can provide a given graph with an acyclic orientation in polynomial time.

	Now let $P \in \mathcal{P}_3$. Then $P$ is isomorphic to either the directed path $\vec{P}_3$, $\vec{V}_3$ or $\revvec{V}_3$ of $P_3$, see Figure \ref{fig:OrientationP3}. 
	Again we may assume $P \in \{\vec{P}_3,\vec{V}_3\}$.

	\paragraph{\textbf{\NP-hardness of the $3$-$\vec{V}_3$-PFC}}
	We again show \NP-hardness by reducing from the $3$-colorability of undirected planar graphs. 
	Suppose we are given a planar graph $G$ as an input to the $3$-colorability problem. Let $D$ be an acyclic orientation of $G$, and let $D'$ be the planar digraph obtained from $D$ by adding for each vertex $v \in V(D)$ a disjoint copy $\vec{O}_v$ of the acyclic outerplanar digraph $\vec{O}$ as given by Observation~\ref{criticaloutermonochromaticpaths} and connecting it to $v$ with the arcs $(u,v),$ $u \in V(\vec{O}_v)$. We claim that $\chi(G) \le 3$ if and only if $D'$ has a $\vec{V}_3$-free $3$-coloring. 
	
	To prove the first direction, assume that $G$ admits a proper $3$-coloring $c\mathop{:}V(G)=V(D) \rightarrow \{0,1,2\}$. Then we can extend this to a vertex-$3$-coloring $c'\mathop{:}V(D') \rightarrow \{0,1,2\}$ of $D'$ by coloring the vertices within each copy $\vec{O}_v$ according to a $\vec{V}_3$-free $\{0,1,2\}$-coloring of $\vec{O}$ in which only one vertex receives color $c(v)$ (the existence of such a coloring is guaranteed by Observation~\ref{criticaloutermonochromaticpaths}). 
	
	We claim that $c'$ is $\vec{V}_3$-free: Suppose towards a contradiction there was a monochromatic copy of $\vec{V}_3$ in $D'$ induced by the vertices $x_1,x_2,x_3$ of color $i \in \{0,1,2\}$. Let $x_2$ be the sink of this copy. If $x_2 \in V(D')\setminus V(D)$, then we have $\{x_1,x_2,x_3\} \subseteq \{x_2 \} \cup N_{D'}^+(x_2) \subseteq O(\vec{O}_v)$ for some $v \in V(D)$, contradicting that by definition $c'|_{\vec{O}_v}$ is $\vec{V}_3$-free. So we must have $x_2 \in V(D)$. Since $c$ is a proper coloring of $G$ and $c(x_1)=c(x_2)=c(x_3)=i$, we must have $x_1, x_3 \in O(\vec{O}_{x_2})$. However, this contradicts our definition of $c'$, according to which there is exactly one vertex in $\vec{O}_{x_2}$ of color $c(x_2)=i$ under $c'$. This contradiction shows that our assumption was wrong, indeed, $c'$ is a $\vec{V}_3$-free coloring of $D'$.
	
	For the reverse, assume that $c'\mathop{:}V(D') \rightarrow \{0,1,2\}$ is a $\vec{V}_3$-free coloring of $D'$, and let $c$ be its restriction to $V(D)=V(G)$. If this was no proper coloring of $G$, there would be an arc $(v_1,v_2) \in A(D)$ with $c'(v_1)=c(v_1)=c(v_2)=c'(v_2)\mathop{=:}i$. Let $u_2$ be a vertex in $V(\vec{O}_{v_2})$ such that $c'(u_2)=c(v_2)=i$ (such a vertex must exist by Observation~\ref{criticaloutermonochromaticpaths}). Then the vertices $v_1,v_2,u_2$ induce a monochromatic copy of $\vec{V}_3$ in $D'$, contradicting our choice of $c'$. This shows that indeed $c$ is a proper $3$-coloring of $G$ and hence $\chi(G) \le 3$.
	
	Since the digraph $D'$ can be constructed from $G$ in polynomial time, the above equivalence yields a reduction of the $3$-coloring problem on planar graphs to the $3$-$\vec{V}_3$-PFC on acyclic planar digraphs. This concludes the proof.

\paragraph{\textbf{\NP-hardness of the $3$-$\vec{P}_3$-PFC}}	
	We show \NP-hardness by reducing from the $3$-colorability of undirected planar graphs. The proof is analogous to the \NP-hardness of $3$-$\vec{V}_3$-PFC which was proven in the last paragraph. 

\end{proof}

We are now ready for the proof of Theorem~\ref{thm:3col_NPhard}.
\begin{proof}[Proof of Theorem~\ref{thm:3col_NPhard}]
	If $n=1$ and $P$ is the one-vertex-path, then the $3$-$P$-PFC admits a trivial constant-time algorithm.

	For every $n \ge 2$ and every $P \in \mathcal{P}_n$, the $3$-$P$-PFC clearly is contained in \NP, since we can verify the correctness of a $P$-free coloring of a given digraph $D$ in time $O(|V(D)|^{|V(P)|})$ by brute-force.

	We now prove the \NP-hardness of the $3$-$P$-PFC, restricted to acyclic inputs, for all $P \in \mathcal{P}_n$ and $n \ge 2$ by induction on $n$. The base cases $n \in \{2,3\}$ are covered by Proposition~\ref{23path3colors}. So assume for the inductive step that $n \ge 4$, $P \in \mathcal{P}_n$ and that we have shown that $3$-$P$-PFC with acyclic inputs is \NP-hard for all oriented paths $P$ of length at least two and at most $n-1$. Then also the $3$-${\rm lrem}(P)$-PFC restricted to acyclic inputs is \NP-hard, since ${\rm lrem}(P)$ is a path on $n-2 \ge 2$ vertices. Proposition~\ref{remove3col} now implies the \NP-hardness of the $3$-$P$-PFC, restricted to acyclic inputs. This concludes the proof by induction.
\end{proof}

\bibliographystyle{abbrv}
\bibliography{bib}

\begin{thebibliography}{1}

\bibitem{Broersma2005}
H.~Broersma, F.~V. Fomin, J.~Kratochvil, and G.~Woeginger.
\newblock Planar graph coloring avoiding monochromatic subgraphs: Trees and
  paths make it difficult.
\newblock {\em Algorithmica}, 44:343--361, 2006.

\bibitem{garey}
M.~R. Garey and D.~S. Johnson.
\newblock {\em Computers and Intractability: A Guide to the Theory of
  NP-Completeness}.
\newblock W. H. Freeman \& Co., USA, 1990.

\bibitem{GIMBEL2003}
J.~Gimbel and C.~Hartman.
\newblock Subcolorings and the subchromatic number of a graph.
\newblock {\em Discrete Mathematics}, 272(2):139--154, 2003.

\bibitem{Gimbel2010}
J.~Gimbel and J.~Ne\v{s}et\v{r}il.
\newblock Partitions of graphs into cographs.
\newblock {\em Discrete Mathematics}, 310(24):3437--3445, 2010.

\bibitem{NeumannLara1982}
V.~Neumann~Lara.
\newblock The dichromatic number of a digraph.
\newblock {\em J. Combin. Theory Ser. B}, 33(3):265--270, 1982.

\bibitem{linear3}
K.~S. Poh.
\newblock On the linear vertex-arboricity of a planar graph.
\newblock {\em Journal of Graph Theory}, 14(1):73--75, 1990.

\bibitem{thomassencycles}
C.~Thomassen.
\newblock Decomposing a planar graph into degenerate graphs.
\newblock {\em Journal of Combinatorial Theory, Series B}, 65(2):305--314,
  1995.

\bibitem{thomassentriangles}
C.~Thomassen.
\newblock 2-list-coloring planar graphs without monochromatic triangles.
\newblock {\em Journal of Combinatorial Theory, Series B}, 98(6):1337--1348,
  2008.

\end{thebibliography}
\end{document}